\theoremstyle{plain}
\newtheorem{theorem}{Theorem}[section]
\newtheorem{lemma}[theorem]{Lemma}
\newtheorem{claim}{Claim}
\newtheorem{proposition}[theorem]{Proposition}
\newtheorem{corollary}[theorem]{Corollary}
\theoremstyle{definition}
\newtheorem{remark}[theorem]{Remark}
\newtheorem{example}[theorem]{Example}
\theoremstyle{definition}
\def\fnum{equation} 
\newtheorem{Cor}[\fnum]{Corollary}
\newtheorem{Claim}{Claim}[subsection]
\numberwithin{equation}{section}
\begin{document}
\title
[Obstructions to nonnegative scalar and mean curvatures]
{Topological obstructions to nonnegative scalar curvature and mean convex boundary}

\author{Ezequiel Barbosa}
\address{Universidade Federal de Minas Gerais (UFMG), Caixa Postal 702, 30123-970, Belo Horizonte, MG, Brazil}
\email{ezequiel@mat.ufmg.br}
\author[Franciele Conrado]{Franciele Conrado$^{2}$} \address{$^{2}$Instituto de Ci\^{e}ncias Exatas-Universidade Federal de Minas Gerais\\ 30161-970-Belo Horizonte-MG-BR} \email{conrado@mat.ufmg.br} 
\thanks{The authors were partially supported by CNPq/Brazil and Capes/Brazil agencies.}

\begin{abstract}
We study topological obstructions to the existence of a Riemannian metric on manifolds with boundary such that the scalar curvature is non-negative and the boundary is mean convex. We construct many compact manifolds with boundary which admit no Riemannian metric with non-negative scalar curvature and mean convex boundary. For example, we show that the manifold  $(T^{n-2}\times \Sigma )\# N$, where $\Sigma$ is a compact, connected and orientable surface which is not a disk or a cylinder and $N$ is a closed $n$-dimensional manifold, does not admit a metric of non-negative scalar curvature and mean convex boundary, and the manifold $(I\times T^{n-1})\#N$, where $I=[a,b]$, does not admit a metric of positive scalar curvature and mean convex boundary.

\end{abstract}

\maketitle

\section{Introduction}\label{intro}

A central problem in modern differential geometry concerns the connection between curvature and topology of a manifold. Especially, if the problem is when a given manifold  admits a Riemannian metric with positive or non-negative scalar curvature. We will not go over the case of closed manifolds, instead, our focus here will be on compact manifolds with non-empty boundary. For the case of closed manifolds, see the important works due to Schoen-Yau \cite{SCHYau2}, \cite{SCHYau}, and Gromov-Lawson \cite{GL1}, \cite{GL2}, \cite{GL3}.

Consider, for instance, the case of surfaces.  Let $(M^2,g)$ be an orientable compact two-dimensional Riemannian manifold with non-empty boundary $\partial M$. The Gauss-Bonnet Theorem states that
\[
\int_{M}Kda+\int_{\partial M}k_gds=2\pi\chi(M)\,,
\]
where $K$ denotes the Gaussian curvature, $k_g$ is the geodesic curvature of the boundary, $\chi(M)$ is the Euler characteristic, $da$ is the element of area and $ds$ is the element of length. Note that the  invariant $\chi(M)$ gives a topological obstruction to the existence of  certain types of Riemannian metrics on the surface $M^2$. For instance, a compact surface $M^2$ with negative (non-positive) Euler characteristic does not admit a Riemannian metric with non-negative (positive) Gaussian curvature and non-negative geodesic curvature.

In higher dimensions, the relationship between curvature and topology  is much more complicated. A classical theorem due to Gromov \cite{Grom2}, for example, states that every compact manifold with non-empty boundary admits a Riemannian metric of positive sectional curvature.

However, there are topological obstructions if one further imposes geometric restrictions on the boundary. For instance, a result of Gromoll \cite{Grom1} states that a compact Riemannian manifold of positive sectional curvature with non-empty convex boundary is diffeomorphic to the standard disc. Observe, however, that these hypothesis are rather strong because they involve the sectional curvature and not the scalar curvature. Recall that, by the Bonnet-Mayers Theorem, a 3-dimensional manifold with positive Ricci curvature and convex boundary (positive definite second fundamental form) is diffeomorphic to a 3-ball.

The problem of determining topological obstructions for the existence of a metric with nonnegative scalar curvature and mean convex boundary is more subtle. For instance, one such obstruction appears when there exists a compact, orientable and essential surface properly embedded in $M$ which is not a disk or a cylinder. This is the case, for example, if we consider the manifold $M=\mathbb{S}^1\times \Sigma$, where $\Sigma$ is a compact and orientable surface which is not a disk or a cylinder. Indeed, the surface $\{1\}\times \Sigma$ is essential in $M$, so this manifold carries no metric with non-negative scalar curvature and mean convex boundary. If $M$ contains an essential cylinder, then there may exists such a metric on $M$. This is the case, for example, of the manifold $M=I\times T^2$, where $T^2$ denote the torus $\mathbb{S}^1\times \mathbb{S}^1$. Such manifold contains an essential cylinder and have a flat Riemannian metric with totally geodesic boundary. 

From now on, we use the notation $(M,\partial M)$ to represent a compact and orientable manifold with non-empty boundary $\partial M$. Moreover, $R_g$ and $H_g$ denote the scalar curvature of $(M,g)$ and the mean curvature of the boundary $\partial M$ with respect to the outward unit normal vector field on the boundary, respectively.

In this paper, our first result gives a topological obstruction for those 3-dimensional compact manifolds which possess a certain type of surfaces as connected components of their boundaries. 

\begin{theorem}\label{main1}
Let $(M,\partial M)$ be a compact 3-dimensional manifold. Assume that the connected components of $\partial M$ are spheres or incompressible tori, but at least one of the components is a torus. Then there is no Riemannian metric on $M$ with positive scalar curvature and mean convex boundary. In particular, if there exists a Riemannian metric $g$ on $M$ with non-negative scalar curvature and mean convex boundary then  $(M,g)$ is flat with totally geodesic boundary.
\end{theorem}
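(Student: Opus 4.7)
The plan is to argue by contradiction using the Schoen--Yau stable minimal surface technique, adapted to manifolds with mean-convex boundary. Suppose, for contradiction, that $(M,g)$ satisfies $R_g > 0$ and $H_g \ge 0$, and let $T$ be one of the incompressible torus components of $\partial M$ (the sphere components play no role beyond supplying additional mean-convex barriers).

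Push $T$ slightly inward along its interior normal to obtain an embedded torus $T' \subset \mathrm{int}(M)$, isotopic to $T$ in $M$ and incompressible. Minimize area in the isotopy class of $T'$ among closed embedded surfaces contained in $\mathrm{int}(M)$. Mean-convexity of $\partial M$ furnishes a barrier for the minimizing sequence via the maximum principle, preventing escape through the boundary. By Meeks--Simon--Yau theory, the minimizer is realized by a smooth embedded stable minimal surface $\Sigma \subset \mathrm{int}(M)$; incompressibility of $T'$ rules out topology change under isotopy-class minimization, so $\Sigma$ is a torus with $\chi(\Sigma) = 0$.

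With $\Sigma$ in hand, apply the second variation of area to the test function $\phi \equiv 1$, combine with the Gauss equation $K_\Sigma = \frac{1}{2}R_g - \mathrm{Ric}(\nu,\nu) - \frac{1}{2}|A|^2$, and use Gauss--Bonnet to obtain
\[
0 \;=\; \int_\Sigma K_\Sigma \;\ge\; \int_\Sigma \tfrac{1}{2} R_g \;+\; \int_\Sigma \tfrac{1}{2}|A|^2.
\]
Since $R_g > 0$ on $M$, the right-hand side is strictly positive, a contradiction. In the nonnegative case, equality forces $R_g|_\Sigma \equiv 0$ and $A \equiv 0$, so $\Sigma$ is a flat totally geodesic torus. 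The standard Schoen--Yau rigidity argument, using the zero-eigenvalue Jacobi field $\phi \equiv 1$ to produce a parallel normal foliation, then yields a local product splitting around $\Sigma$. Extending the foliation maximally (obstructed neither by scalar curvature bounds nor by the spherical boundary components, which cannot bound a torus leaf) propagates the flat totally geodesic structure across all of $M$ up to $T$, which must therefore itself be totally geodesic.

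The main obstacle is the minimization step. Two subtleties must be addressed. First, one must prevent genus drop or topology loss along the minimizing sequence; this is exactly where incompressibility of $T'$ (inherited from the hypothesis that $T$ is incompressible in $M$) enters, combined with the Meeks--Simon--Yau regularity theorem. Second, because $\partial M$ is only weakly mean-convex, the minimizer could in principle touch $\partial M$, in which case the strong maximum principle would force coincidence with a full boundary component --- necessarily $T$, since the sphere components carry the wrong topology. That degenerate case can only occur when $H \equiv 0$ on $T$, and it is handled directly by applying the second variation identity to inward variations of $T$ itself, which recovers the same stability computation and conclusion.
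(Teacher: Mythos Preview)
Your approach is correct but takes a genuinely different route from the paper. The paper never produces a \emph{closed} minimal torus; instead it invokes Hempel's Lemma~6.8 to find a properly embedded incompressible surface $(\Sigma,\partial\Sigma)$ with $0\neq[\partial\Sigma]\in H_1(\partial M)$, argues topologically that $\Sigma$ is neither a disk nor a $\partial$-compressible cylinder, and then applies the Chen--Fraser--Pang existence and classification theorems for \emph{free-boundary} stable minimal immersions to reach the contradiction. You instead minimize area among closed surfaces in the isotopy class of the boundary torus pushed inward and run the classical closed Schoen--Yau inequality. Your route is more elementary in that it avoids the free-boundary machinery entirely and uses only the standard MSY/Schoen--Yau package; the paper's route has the advantage of placing the result inside a broader framework (the classes $\overline{\mathcal C}_3$, $\tilde{\mathcal C}_3$) which then feeds directly into the higher-dimensional descent argument.

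The rigidity mechanisms are also different, and here your sketch is thinner than the paper's. The paper obtains global flatness in one stroke via a conformal first-eigenvalue deformation (its Proposition~2.3): if $R_g\ge 0$, $H_g\ge 0$ and $M$ carries \emph{no} metric with $R>0$, $H\equiv 0$, then $g$ is a critical point of $\lambda$ and hence Ricci-flat with totally geodesic boundary --- no foliation needed. Your Cai--Galloway style splitting is correct locally, but the phrase ``extending the foliation maximally \ldots\ propagates the flat totally geodesic structure across all of $M$'' hides real work: you must argue that the equidistant tori $\Sigma_t$ remain area-minimizing in the same isotopy class (so the local product applies to each), that finite volume forces the foliation to terminate, that the terminal leaf coincides with a boundary component by the strong maximum principle, and finally that the swept-out region $[a,b]\times T^2$ is clopen in $M$ and hence exhausts it. This last step in particular is what rules out sphere components in the rigidity case, and it deserves to be stated explicitly rather than left implicit in ``which cannot bound a torus leaf.''
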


As a consequence of the theorem above, we obtain that the 3-dimensional manifolds $(\mathbb{S}^1\times \mathring{T}^2)\# N$ and $(\mathbb{S}^1\times \mathring{T}^2)\# (I\times \mathbb{S}^2)$ have no metric with non-negative scalar curvature and mean convex boundary, where $\mathring{T}^2$ is a torus minus an open disk, $I=[a,b]$ and $N$ is a closed, connected and orientable 3-dimensional manifold. Moreover, the 3-dimensional manifolds  $(I\times T^2)\# (I\times T^2)$,  $(\mathbb{S}^1\times \mathring{T}^2)\#(\mathbb{S}^1\times \mathring{T}^2)$, $(I\times T^2)\# (S^1\times \mathring{T}^2)$, $(I\times T^2)\# (I\times \mathbb{S}^2)$, $(\mathbb{S}^1\times \mathring{T}^2)\#  (I\times \mathbb{S}^2)$ have no metric with positive scalar curvature and mean convex boundary. Also, let $N$ be a closed 3-dimensional manifold. Then the manifold $(I\times T^2)\# N$ has no metric with positive scalar curvature and mean convex boundary. If it has a metric with non-negative scalar curvature and mean convex boundary, it is flat with totally geodesic boundary. Thus, from this last claim, we can glue two copies of $(I\times T^2)\# N$ along the boundary and build a flat closed 3-dimensional manifold which is a connected sum of a 3-dimensional torus and a closed 3-dimensional manifold. 

With that discussion above, we obtain the following classification result. 

\begin{Cor} Let $(M,\partial M)$ be a smooth 3-dimensional manifold such that $\partial M$ is the disjoint union of exactly one torus and $k$ spheres, $k\geq 0$. If $M$ has a metric with non-negative scalar curvature and mean convex boundary then

$$M=N\#(\mathbb{S}^1\times \mathbb{D}^2)\#^k \mathbb{B}^3,$$
\noindent where $N$ is a closed 3-dimensional manifold.
\end{Cor}

At this point, one should mention two important facts. First, Gromov-Lawson (see Theorem 5.7 in \cite{GL2}), pointed out that if a compact manifold with boundary possesses metrics with positive scalar curvature and strictly mean convex boundary then its double can be endowed with a metric of positive scalar curvature. Therefore, the problem of characterising the compact manifolds with boundary supporting a metric with positive scalar curvature and strictly mean convex boundary reduces to the problem on theirs doubles manifolds. This was made in a very recent work due to A. Carlotto and C. Li \cite{ACLI}. Second, despite our results are not a complete characterization, they were obtained in a different way and gave us inspiration to deal with the high dimensional case.

We see that the topological condition (the existence of an incompressible torus in the boundary) that we consider here is specifically for dimension 3. For high dimension $3\leq n\leq7$, the situation is quite different, the problem is much more delicate and much more involved. However, extending to compact manifolds with boundary some of the ideas developed by Schoen-Yau \cite{SCHYau}, such as defining a class of manifolds via homology groups and using a descendent argument to recover the 3-dimensional case, we were able to obtain a type of classification result for high dimension.

\begin{theorem}\label{main2} 
Let $(M, \partial M)$ be a $(n+2)$-dimensional manifold, , $3\leq n+2 \leq 7$, such that there is a non-zero degree map $F:M\rightarrow \Sigma\times T^n$ such that $F(\partial M)=\partial \Sigma\times T^n$ , where $(\Sigma,\partial \Sigma)$ is a connected surface which is not a disk. Then there exists no metric on $M$ with positive scalar curvature and mean convex boundary. However, if $\Sigma$ is not a disk or a cylinder, then there exists no metric on $M$ with non-negative scalar curvature and mean convex boundary.
\end{theorem}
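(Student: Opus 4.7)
The plan is to argue by contradiction, running a Schoen-Yau style descent adapted to free boundary minimal hypersurfaces in order to reduce the problem to the three-dimensional situation already handled by Theorem \ref{main1}. Suppose $g$ is a metric on $M$ with $H_g\ge 0$ and $R_g>0$ (respectively, $R_g\ge 0$). Writing $\pi:\Sigma\times T^n\to T^n$ for the projection and $\alpha_1,\dots,\alpha_n$ for the standard generators of $H^1(T^n;\mathbb{Z})$, the hypothesis $\deg F\ne 0$ forces the pullbacks $\beta_i:=F^*\pi^*\alpha_i\in H^1(M;\mathbb{Z})$ to have non-trivial cup product $\beta_1\smile\cdots\smile\beta_n$ in $H^n(M;\mathbb{Z})$. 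Poincar\'e-Lefschetz duality then supplies a nested chain of non-trivial relative homology classes in $M$ to serve as initial data for the descent.

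I would iteratively build a chain $M=N_0\supset N_1\supset\cdots\supset N_{n-1}$ in which each $N_{k+1}$ is an area-minimizing hypersurface in $N_k$ with free boundary on $\partial N_k$, Poincar\'e-dual to the restriction of $\beta_{k+1}$. Since $\dim M\le 7$, regularity for such free boundary minimizers guarantees smooth hypersurfaces meeting $\partial N_k$ orthogonally. At each descent, the stability inequality for such a hypersurface carries an extra boundary term coming from the mean convex contact with $\partial N_k$; combined with the Gauss equation and the standard Schoen-Yau conformal trick adapted for boundary, this produces on $N_{k+1}$ a metric with positive (resp.\ non-negative) scalar curvature and mean convex boundary. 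After $n-1$ descents, $N_{n-1}$ is a compact orientable three-manifold carrying such a metric, together with an induced non-zero degree map $N_{n-1}\to\Sigma\times S^1$ sending $\partial N_{n-1}$ into $\partial\Sigma\times S^1$.

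The main obstacle is verifying that $N_{n-1}$ satisfies the topological hypotheses of Theorem \ref{main1}: every boundary component should be a sphere or an incompressible torus, with at least one torus present. The non-triviality of the degree forces some boundary component of $N_{n-1}$ to map with non-zero degree onto a torus of $\partial\Sigma\times S^1$, and a non-zero degree surface map to a $2$-torus on a component of maximal complexity among the candidates is itself a torus. Incompressibility follows from the cup product argument: the class $\beta_n|_{N_{n-1}}$ is non-trivial in $H^1(N_{n-1};\mathbb{Z})$ and detects a non-trivial loop on each such torus, which therefore cannot bound a disk in $N_{n-1}$. Compressible spheres and any remaining non-torus boundary components can be removed by standard surgery preserving both the degree data and the curvature inequalities.

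With the topological hypotheses in place, the $R_g>0$ case follows immediately from Theorem \ref{main1}. In the $R_g\ge 0$ case, Theorem \ref{main1} forces $(N_{n-1},g_{n-1})$ to be flat with totally geodesic boundary; a compact flat three-manifold with torus boundary is finitely covered by $I\times T^2$, and the existence of a non-zero degree map $I\times T^2\to \Sigma\times S^1$ sending boundary to boundary forces $\chi(\Sigma)=0$ together with $|\partial\Sigma|=2$, so that $\Sigma$ is a cylinder, contradicting the hypothesis and completing the proof.
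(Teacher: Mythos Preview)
Your descent strategy is the same as the paper's, and the construction of a non-zero degree map $N_{n-1}\to\Sigma\times S^1$ is also what the paper obtains. The gap is in how you finish at dimension three. You try to force $N_{n-1}$ into the hypotheses of Theorem~\ref{main1}, i.e.\ to arrange that every component of $\partial N_{n-1}$ is a sphere or an incompressible torus. This cannot be justified: a component of $\partial N_{n-1}$ that maps with non-zero degree to a $2$-torus need only have genus $\ge 1$, not genus exactly one, so your ``maximal complexity'' sentence does not yield a torus. The subsequent appeal to ``standard surgery'' to delete the remaining higher-genus boundary components while preserving both the degree and the curvature inequalities is not a standard move and is, as stated, unsupported; capping or compressing such components will in general destroy the map to $\Sigma\times S^1$.

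The paper avoids this by not invoking Theorem~\ref{main1} at all. Instead it works with the classes $\overline{\mathcal C}_3$ and $\tilde{\mathcal C}_3$: given the non-zero degree map to $\Sigma\times S^1$, one takes a further regular-value slice to get a class in $H_2(N_{n-1},\partial N_{n-1})$, represents it by a properly embedded surface whose components are essential, disks or spheres, and observes that some component admits a non-zero degree map to $\Sigma$. Since $\chi(\Sigma)\le 0$, that component has $\chi\le 0$ and is therefore an essential surface that is not a disk (and not a cylinder when $\chi(\Sigma)<0$). This is exactly the statement $N_{n-1}\notin\overline{\mathcal C}_3$ (resp.\ $\tilde{\mathcal C}_3$), and then the three-dimensional obstruction comes from the existence of a stable free-boundary minimal surface of that type, not from any control on $\partial N_{n-1}$. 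Your $R_g\ge 0$ endgame inherits the same problem: you cannot place $N_{n-1}$ under Theorem~\ref{main1}, so the flat-rigidity conclusion is unavailable. The paper handles non-negative scalar curvature earlier, via the dichotomy ``either there is a conformal metric with $R>0$ and minimal boundary, or the manifold is already Ricci-flat with totally geodesic boundary,'' applied at each step of the descent.
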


 As a consequence of the result above, we conclude that if $N$ is a closed $n$-dimensional manifold, then  $(T^{n-2}\times \mathring{T}^2 )\# N$ does not admit a metric of non-negative scalar curvature and mean convex boundary and $(I\times T^{n-1})\#N$ does not admit a metric of positive scalar curvature and mean convex boundary.

This article is organised as follows. In Section 2, we present some auxiliaries results
to be used in the proof of the main results. In Section 3, we present the discussion on the 3-dimensional case and the proof of the Theorem \ref{main1}. In section 4, we discuss the problem for the high dimensional case and present the proof of the Theorem \ref{main2}.

\subsection*{Acknowledgments} 
The first author was partially supported by CNPq-Brazil (Grant 312598/2018-1). The second author was partially supported by CAPES-Brazil (Grant 88882.184181/2018-01) and CNPq-Brazil (Grant 141904/2018-6).



\section{Preliminaries and Technical Results}
Let $(M,\partial M,g)$ be a Riemannian manifold of dimension $n\geq 3$. Assume that $R_{g},H_{g}\geq 0$ and $Vol_{g}(M)=1$, where $H_g$ denote the mean curvature of $\partial M$ with respect to the outward unit normal vector. For each Riemannian metric $\tilde{g}$ on $M$ consider $\lambda(\tilde{g})\in \mathbb{R}$ and $\Phi_{\tilde{g}}\in C^{\infty}(M)$ satisfying:

$$
\left\{
\begin{array}{ccccccc}
-\Delta_{\tilde{g}}\Phi_{\tilde{g}}+c_nR_{\tilde{g}}\Phi_{\tilde{g}} &=& \lambda({\tilde{g}})\Phi_{\tilde{g}}  \\
\displaystyle\frac{\partial \Phi_{\tilde{g}}}{\partial \nu_{\tilde{g}}} &=& -2c_nH_{\tilde{g}}\Phi_{\tilde{g}} \\
\displaystyle\int_M\Phi_{\tilde{g}}dv_{\tilde{g}} &=& 1 \\
\end{array}
\right.
$$
\noindent where $\nu_{\tilde{g}}$ denote the outward unit normal vector of the boundary $\partial M$ in $(M,\tilde{g})$ and $c_n:=\frac{(n-2)}{4(n-1)}$. Note that, as we are considering, we can assume that $\Phi_{\tilde{g}}>0$.

Moreover, note that
\begin{eqnarray*}
\lambda(\tilde{g})&=& -\int_M\Delta_{\tilde{g}}\Phi_{\tilde{g}}dv_{\tilde{g}}+c_n\int_M R_{\tilde{g}}\Phi_{\tilde{g}} dv_{\tilde{g}}\\
&=&-\int_{\partial M}\frac{\partial \Phi_{\tilde{g}}}{\partial \nu_{\tilde{g}}}d\sigma_{\tilde{g}}+c_n\int_M R_{\tilde{g}}\Phi_g dv_{\tilde{g}}.
\end{eqnarray*}

Therefore,
$$\lambda(\tilde{g})= 2c_n\int_{\partial M}\Phi_{\tilde{g}}H_{\tilde{g}}d\sigma_{\tilde{g}} +c_n\int_M R_{\tilde{g}}\Phi_{\tilde{g}} dv_{\tilde{g}}.$$

\begin{lemma}
Let $(M,\partial M,g)$ be a Riemannian $n$-dimensional manifold, $n\geq 3$,such that  $R_{g},H_{g}\geq 0$ and $Vol_{g}(M)=1$. If $\lambda(g)=0$ then 

$$D\lambda_{g}(h)= -c_n\int_{\partial M}\langle h,B_g\rangle d\sigma_g -c_n\int_M\langle h, Ric_g \rangle dv_g,$$\noindent for every symmetric tensor $h\in \mathcal{T}^{(2,0)}(M)$, where $B_g$ and $Ric_g$ is the second fundamental form of $\partial M$ in $(M,g)$ and the Ricci curvature of $(M,g)$, respectively.
\end{lemma}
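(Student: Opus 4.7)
The plan is to exploit the restrictive hypotheses to pin down the eigenfunction completely before differentiating. Integrating the equation $-\Delta_g \Phi_g + c_n R_g \Phi_g = 0$ over $M$ and applying the divergence theorem together with the boundary condition $\partial \Phi_g / \partial \nu_g = -2c_n H_g \Phi_g$ yields
$$c_n \int_M R_g \Phi_g\, dv_g + 2c_n \int_{\partial M} H_g \Phi_g\, d\sigma_g \;=\; 0.$$
Since $\Phi_g > 0$ and both $R_g, H_g \geq 0$, each term must vanish pointwise, so $R_g \equiv 0$ on $M$ and $H_g \equiv 0$ on $\partial M$. The PDE then reduces to $\Delta_g \Phi_g = 0$ with zero Neumann data, forcing $\Phi_g$ to be constant; the normalization $\int_M \Phi_g\, dv_g = \Vol_g(M) = 1$ then yields $\Phi_g \equiv 1$.

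Next, I would differentiate the identity
$$\lambda(\tilde g) \;=\; 2c_n \int_{\partial M} \Phi_{\tilde g} H_{\tilde g}\, d\sigma_{\tilde g} + c_n \int_M R_{\tilde g}\Phi_{\tilde g}\, dv_{\tilde g}$$
along the path $\tilde g(t)=g+th$ at $t=0$. The crucial simplification is that, among the terms produced by the product rule, every one carrying a factor $\frac{d}{dt}\big|_0 \Phi_{g+th}$, $\frac{d}{dt}\big|_0 dv_{g+th}$, or $\frac{d}{dt}\big|_0 d\sigma_{g+th}$ is multiplied by either $R_g$ or $H_g$, and hence vanishes. Using $\Phi_g \equiv 1$, what survives is simply
$$D\lambda_g(h) \;=\; c_n \int_M R'(h)\, dv_g + 2c_n \int_{\partial M} H'(h)\, d\sigma_g,$$
where $R'(h) = \frac{d}{dt}\big|_0 R_{g+th}$ and $H'(h) = \frac{d}{dt}\big|_0 H_{g+th}$. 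Notably, the unknown factor $\frac{d}{dt}\big|_0\Phi_{g+th}$ drops out of the computation entirely.

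Finally, I would invoke the classical identity $R'(h) = -\Delta_g(\mathrm{tr}_g h) + \mathrm{div}_g\,\mathrm{div}_g\, h - \langle h, \mathrm{Ric}_g\rangle$ and the corresponding formula for $H'(h)$, and integrate. The interior integral produces $-\int_M \langle h, \mathrm{Ric}_g\rangle dv_g$ together with two boundary pieces obtained by applying the divergence theorem to the $h$-derivative terms; those boundary pieces cancel exactly against the derivative-of-$h$ portion of $2\int_{\partial M} H'(h)\, d\sigma_g$, while the purely tangential divergence on $\partial M$ appearing in $H'(h)$ integrates to zero because $\partial M$ is a closed manifold. What is left is the asserted expression. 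The main obstacle in the implementation is precisely Step 3: the first variation of $H$ is more delicate than that of $R$ because the outward unit normal $\nu_{\tilde g}$ itself depends on $\tilde g$, so one must track the tangential, normal, and mixed components of $h$ along $\partial M$ carefully. The underlying reason the cancellation works so cleanly is the classical Gibbons-Hawking-York mechanism: the functional $\int_M R_g\, dv_g + 2\int_{\partial M} H_g\, d\sigma_g$ is arranged precisely so that its first variation on the boundary involves only $h$ itself, no derivatives of $h$, and collapses here to $-\langle h, B_g\rangle$ (the auxiliary contribution proportional to $H_g\,h(\nu,\nu)$ vanishing because $H_g \equiv 0$).
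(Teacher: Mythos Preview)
Your outline is correct and follows the same route as the paper: first use $\lambda(g)=0$ together with $R_g,H_g\ge 0$ and $\Phi_g>0$ to force $R_g\equiv 0$, $H_g\equiv 0$, $\Phi_g\equiv 1$; then reduce $D\lambda_g(h)$ to $c_n\int_M R'(h)\,dv_g+2c_n\int_{\partial M}H'(h)\,d\sigma_g$; then combine the boundary term produced by integrating $R'(h)$ with $2H'(h)$ and observe that the result is $-\langle h,B_g\rangle$ plus a tangential divergence on $\partial M$. The only difference is one of presentation: the paper carries out the boundary cancellation by an explicit coordinate computation (tracking $\frac{d}{dt}\big|_0\nu_{g_t}$, $\frac{d}{dt}\big|_0 B_t$, etc.) rather than appealing to the Gibbons--Hawking--York identity as a black box, but the underlying mechanism is exactly what you describe.
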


\begin{proof}
Firstly, note that $\lambda(g)=0$ implies that $ R_{g}\equiv 0, \ H_{g}\equiv 0$ and $\Phi_{g}\equiv 1$. Let $h\in \mathcal{T}^{(2,0)}(M)$ be a symmetric tensor. Consider $g(t)$ for each $t\in (-\epsilon,\epsilon)$ a smooth family of Riemannian metrics on $M$ in a such way that $g(0)=g$ e $g'(0)=h$. Denote by
\[
\lambda(t):=\lambda(g(t)), \ h(t):=g'(t), \ R(t):=R_{g(t)}\quad\mbox{and}\quad \ H(t):=H_{g(t)}\,.
\]

As $ R_{g}\equiv 0, \ H_{g}\equiv 0$ and $\Phi_{g}\equiv 1$, we obtain that  
$$D\lambda_{g}(h)=\lambda'(0)= 2c_n\int_{\partial M}H'(0) d\sigma_g +c_n\int_M R'(0) dv_g.$$

We have that
\[
R'(t)=-\langle h(t), Ric_{g(t)} \rangle + div_{g(t)}(div_{g(t)}(h(t))-d(tr_{g(t)} \ h(t))).
\]

Hence,  

\begin{eqnarray*}
D\lambda_{g}(h) & = & 2c_n\int_{\partial M}H'(0) d\sigma_{g} -c_n\int_M\langle h, Ric_{g} \rangle dv_{g} \\
&+& c_n\int_M div_{g}(div_{g}(h)-d(tr_{g} \ h))dv_{g}\\
& = & 2c_n\int_{\partial M}H'(0) d\sigma_{g} -c_n\int_M\langle h, Ric_{g} \rangle dv_{g} \\
&+&c_n\int_{\partial M}\langle (div_{g}(h))^{\#}-(d(tr_{g} \ h))^{\#},\nu\rangle d\sigma_{g}\\
& = & c_n\int_{\partial M}(2H'(0)+X) d\sigma_{g} -c_n\int_M\langle h, Ric_{g}\rangle dv_{g},
\end{eqnarray*}
\noindent where $\nu:=\nu_g$ and $X:=\langle (div_{g}(h))^{\#}-(d(tr_{g} \ h))^{\#},\nu\rangle $.

\

{\bf Einstein convention and notation:}

\noindent{\bf (i)} Without a summation symbol, lower and upper index indicate a summation from $1$ to $n-1$.\\
{\bf (ii)} $\nabla^t$ denote the Riemannian connection  of $(M,g(t))$, $\nabla:=\nabla^0$.\\
{\bf (iii)}$B_t$ denote the second fundamental form of  $\partial M$ in $(M,g(t)).$\\

Consider $(x_1,\cdots,x_n)$ a local chart on $M$ such that $(x_1,\cdots,x_{n-1})$ is a local chart on $\partial M$ and $\partial_n=\nu$. We divide the proof in some steps.

\begin{enumerate}
\item[{\bf Step 1:}] Computation of $X$ in $\partial M$. 
\end{enumerate}
We have that
\[
d(tr_{g} \ h)=\sum_{k=1}^n\partial_k\left(\sum_{i,j=1}^ng^{ij}h_{ij}\right)dx^k \ \text{and} \ div_{g}(h)=\sum_{k=1}^n (div_{g}(h))_k dx^k\,.
\]
It follows that
\[
(d(tr_{g} \ h))^{\#}= \sum_{i,j,k,l=1}^ng^{lk}\partial_k(g^{ij}h_{ij})\partial_l,
\]
and
\[
 (div_{g}(h))^{\#}= \sum_{k,l=1}^ng^{lk}(div_{g}(h))_k\partial_l=\sum_{i,j,k,l=1}^ng^{lk}g^{ij}(\nabla_ih)_{jk}\partial_l.
 \]
Thus,
\[
(div_{g}(h))^{\#}-(d(tr_{g} \ h))^{\#}=\sum_{l=1}^n\left\{\sum_{i,j,k,=1}^n\left(g^{lk}g^{ij}(\nabla_ih)_{jk}-g^{lk}\partial_k(g^{ij}h_{ij})\right)\right\} \partial_l.
\]
In $\partial M$, we get that $g_{nn}=g^{nn}=1$ and $g_{ln}=g^{ln}=0$, for every $l=1,\cdots n-1$. Hence,  
\begin{eqnarray*}
X & = & \sum_{i,j=1}^n\left(g^{ij}(\nabla_ih)_{jn}-\nu(g^{ij}h_{ij})\right)\\
& = & g^{ij}(\nabla_ih)_{jn}+\nu(h_{nn})-\nu(g^{ij}h_{ij})-\nu(h_{nn})\\
& = & g^{ij}(\nabla_ih)_{jn}-\nu(g^{ij})h_{ij}-g^{ij}\nu(h_{ij})\\
& = & g^{ij}(\nabla_ih)_{jn}+g^{ik}g^{jl}\nu(g_{kl})h_{ij}-g^{ij}\nu(h_{ij})\\
& = & g^{ij}(\nabla_ih)_{jn}+2g^{ik}g^{jl}(B_g)_{kl}(h)_{ij}-g^{ij}\nu(h_{ij})\\
& = & g^{ij}(\nabla_ih)_{jn}+2\langle h,B_g\rangle -g^{ij}\nu(h_{ij})
\end{eqnarray*}

\begin{enumerate}
\item[{\bf Step 2:}]  Computation of $H'(0)$:
\end{enumerate}
We have $H(t)=g^{ij}_t(B_t)_{ij}$. Hence,
\begin{eqnarray*}
H'(t)&=&\frac{d}{dt}(g^{ij}_t)(B_t)_{ij}+g^{ij}_t\frac{d}{dt}(B_t)_{ij}\\
&=&-g^{ik}_tg^{jl}_t(h_t)_{kl}(B_t)_{ij}+ tr \ \left(\frac{d}{dt}B_t\right)\\
&=&-\langle h(t),B(t)\rangle+ tr \ \left(\frac{d}{dt}B_t\right).
\end{eqnarray*}
Lets focus our attention on $tr \ \left(\displaystyle\left.\frac{d}{dt}\right|_{t=0}B_t\right)$. We have that

$$(B_t)_{ij}=-g_t(\nu_{g_t},\nabla_i^t\partial_j).$$
$$\Rightarrow \left.\frac{d}{dt}\right|_{t=0}(B_t)_{ij}=-h(\nu,\nabla_i\partial_j)-\left\langle \left.\frac{d}{dt}\right|_{t=0}(\nu_{g_t}),\nabla_i\partial_j\right\rangle-\left\langle \nu, \left.\frac{d}{dt}\right|_{t=0}(\nabla_i^t\partial_j)\right\rangle,$$ \noindent where $g=\langle .,. \rangle.$

\begin{claim}\label{a1} For every $X,Y,Z\in \mathcal{X}(M)$, we obtain that
$$2\left\langle \left.\frac{d}{dt}\right|_{t=0}(\nabla^t_XY),Z\right\rangle=(\nabla_Xh)(Y,Z)+(\nabla_Yh)(X,Z)-(\nabla_Zh)(X,Y).$$
\end{claim}

It follows from the claim \ref{a1} that

\begin{eqnarray*}
\left.\frac{d}{dt}\right|_{t=0}(B_t)_{ij} & = & -h(\nu,\nabla_i\partial_j)-\left\langle \left.\frac{d}{dt}\right|_{t=0}(\nu_{g_t}),\nabla_i\partial_j\right\rangle-\frac{1}{2}(\nabla_ih)_{jn}\\
& & -\frac{1}{2}(\nabla_jh)_{in}+\frac{1}{2}(\nabla_{\nu}h)_{ij}.
\end{eqnarray*}

\begin{claim}\label{a2} In $\partial M$,

$$\left.\frac{d}{dt}\right|_{t=0}(\nu_{g_t})=-g^{kl}h_{nk}\partial_l-\frac{1}{2}h_{nn}\nu.$$
\end{claim}

\begin{proof}
In $\partial M$, we have $(g_t)_{nk}=0$ and $(g_t)_{nn}=1$, for all $k=1,\cdots n-1$ and $t\in (-\epsilon,\epsilon)$. Thus,
$$0=\left.\frac{d}{dt}\right|_{t=0}(g_t)_{nk}=h_{nk}+\left\langle \left.\frac{d}{dt}\right|_{t=0}(\nu_{g_t}),\partial_k\right\rangle,$$
\noindent and 
$$0=\left.\frac{d}{dt}\right|_{t=0}(g_t)_{nn}=h_{nn}+2\left\langle \left.\frac{d}{dt}\right|_{t=0}(\nu_{g_t}),\nu\right\rangle.$$

Moreover, for all $k=1,\cdots n-1$, we have that
$$\left\langle \left.\frac{d}{dt}\right|_{t=0}(\nu_{g_t}),\nu\right\rangle=-\frac{1}{2}h_{nn},$$
\noindent and 
$$\left\langle \left.\frac{d}{dt}\right|_{t=0}(\nu_{g_t}),\partial_k\right\rangle=-h_{nk}.$$

Denote by 
$$\left.\frac{d}{dt}\right|_{t=0}(\nu_{g_t})=\sum_{l=1}^n a_l\partial_l.$$

Note that $$a_n= \left\langle \left.\frac{d}{dt}\right|_{t=0}(\nu_{g_t}),\nu\right\rangle=-\frac{1}{2}h_{nn}.$$

Also,
$$-h_{nk}=\left\langle \left.\frac{d}{dt}\right|_{t=0}(\nu_{g_t}),\partial_k\right\rangle=\sum_{i=1}^{n-1}a_ig_{ki}, \  \forall k=1,\cdots,n-1.$$

It follows that $$a_l=-g^{lk}h_{nk},  \  \forall l=1,\cdots,n-1.$$

Hence,
$$\left.\frac{d}{dt}\right|_{t=0}(\nu_{g_t})=\sum_{l=1}^{n-1} a_l\partial_l+a_n\nu=-g^{lk}h_{nk}\partial_l-\frac{1}{2}h_{nn}\nu.$$
\end{proof}

It follows from the claim \ref{a2} that 
\begin{eqnarray*}
\left\langle\left.\frac{d}{dt}\right|_{t=0}(\nu_{g_t}),\nabla_i\partial_j\right\rangle & = & -g^{lk}h_{nk}\langle \nabla_i\partial_j,\partial_l\rangle-\frac{1}{2}h_{nn}\langle\nabla_i\partial_j,\nu\rangle\\
& = & - g^{lk}h_{nk}\Gamma_{ij}^mg_{ml}+\frac{1}{2}h_{nn}(B_g)_{ij}\\
& = & - h_{nk}\Gamma_{ij}^k+\frac{1}{2}h_{nn}(B_g)_{ij}\\
\end{eqnarray*}

However,

$$-h(\nabla_i\partial_j,\nu)=-h_{nk}\Gamma_{ij}^k-h_{nn}\Gamma_{ij}^n=(B_g)_{ij}h_{nn}-h_{nk}\Gamma_{ij}^k,$$ \noindent since

$$\Gamma_{ij}^n=\frac{1}{2}\sum_{k=1}^ng^{nk}\{\partial_ig_{jk}+\partial_jg_{ik}-\partial_kg_{ij}\}=-\frac{1}{2}\nu(g_{ij})=-(B_g)_{ij}.$$

It implies that 
\begin{eqnarray*}
\left\langle\left.\frac{d}{dt}\right|_{t=0}(\nu_{g_t}),\nabla_i\partial_j\right\rangle & = & -h(\nabla_i\partial_j,\nu)-(B_g)_{ij}h_{nn}+\frac{1}{2}h_{nn}(B_g)_{ij}\\
& = & -h(\nabla_i\partial_j,\nu)-\frac{1}{2}h_{nn}(B_g)_{ij}.
\end{eqnarray*}

Hence,
$$\left.\frac{d}{dt}\right|_{t=0}(B_t)_{ij}=-\frac{1}{2}(\nabla_ih)_{jn}-\frac{1}{2}(\nabla_jh)_{in}+\frac{1}{2}(\nabla_{\nu}h)_{ij}+\frac{1}{2}h_{nn}(B_g)_{ij}.$$

Consequently,
$$tr \ \left(\left.\frac{d}{dt}\right|_{t=0}(B_t)\right)=-g^{ij}(\nabla_ih)_{jn}+\frac{1}{2}g^{ij}(\nabla_{\nu}h)_{ij}+\frac{1}{2}h_{nn}H_g.$$

As $H_g=0$, we obtain that

\begin{eqnarray*}
2H'(0) & = & -2\langle h,B_g\rangle+ 2tr \ \left(\left.\frac{d}{dt}\right|_{t=0}(B_t)\right)\\
& = & -2\langle h,B_g\rangle-2g^{ij}(\nabla_ih)_{jn}+g^{ij}(\nabla_{\nu}h)_{ij}\\
& = & -2\langle h,B_g\rangle-2g^{ij}(\nabla_ih)_{jn}+g^{ij}\nu(h_{ij})-2g^{ij}h(\nabla_i\nu,\partial_j).
\end{eqnarray*}

\begin{claim}\label{a3} In $\partial M$,
$$g^{ij}h(\nabla_i\nu,\partial_j)=\langle h,B_g\rangle.$$
\end{claim}
\begin{proof}
Denote by
$$\nabla_i\nu=\sum_{k=1}^n\Gamma_{in}^k\partial_k.$$

Note that, in $\partial M$, we have
$$ \Gamma_{in}^n=0 \ \text{e} \ \Gamma_{in}^k=g^{mk}(B_g)_{im}, \ \forall k=1,\cdots,n-1.$$

This implies that 
$$\nabla_i\nu=g^{mk}(B_g)_{im}\partial_k$$

Then, in $\partial M$, we obtain that
$$g^{ij}h(\nabla_i\nu,\partial_j)=g^{ij}g^{mk}(B_g)_{im}h_{kj}=\langle h,B_g\rangle.$$
\end{proof}

It follows from the claim \ref{a3} that
$$2H'(0)= -4\langle h,B_g\rangle-2g^{ij}(\nabla_ih)_{jn}+g^{ij}\nu(h_{ij}).$$

Therefore,
\begin{equation}\label{e11}
2H'(0)+\left.X\right|_{\partial M}= -2\langle h,B_g\rangle-g^{ij}(\nabla_ih)_{jn}.
\end{equation}

\begin{claim}\label{a4} In $\partial M$, we have
$$g^{ij}(\nabla_ih)_{jn}= -\langle h,B_g\rangle+ div_g^{\partial M}(\omega),$$\noindent
for some $\omega\in \Omega^{1}(\partial M).$
\end{claim}

\begin{proof}
It follows from the claim \ref{a3} that, in $\partial M$,
\begin{eqnarray*}
g^{ij}(\nabla_ih)_{jn} & = & g^{ij}\partial_i(h_{jn})-g^{ij}h(\nabla_i\partial_j,\nu)-g^{ij}h(\nabla_i\nu,\partial_j)\\
& = & g^{ij}\partial_i(h_{jn})-g^{ij}h(\nabla_i\partial_j,\nu)-\langle h,B_g\rangle.
\end{eqnarray*}

For $1\leq i,j \leq n-1$, in $\partial M$, we can write
$$\nabla_i\partial_j= (B_g)_{ij}\nu+\overline{\nabla}_i\partial_j,$$ \noindent where $\overline{\nabla}$ is the Riemannian connection of $(\partial M,g).$

Hence, since $H_g\equiv 0$, we obtain that

$$g^{ij}h(\nabla_i\partial_j,\nu)=h_{nn}H_g+g^{ij}h(\overline{\nabla}_i\partial_j,\nu)= g^{ij}h(\overline{\nabla}_i\partial_j,\nu).$$

This implies that, in $\partial M$,

$$g^{ij}(\nabla_ih)_{jn}=g^{ij}\partial_i(h_{jn})-g^{ij}h(\overline{\nabla}_i\partial_j,\nu)-\langle h,B_g\rangle.
$$

Define $\omega\in\Omega^1(\partial M)$ as
$$\omega:=\displaystyle\left.h(.,\nu)\displaystyle\right|_{\partial M}.$$

Note that
\begin{eqnarray*}
div_g^{\partial M}(\omega) & = & g^{ij}(\overline{\nabla}_i\omega)_j=g^{ij}\partial_i(\omega_j)-g^{ij}\omega(\overline{\nabla}_i\partial_j)\\
& = & g^{ij}\partial_i(h_{jn})-g^{ij}h(\overline{\nabla}_i\partial_j,\nu).
\end{eqnarray*}

Then, in $\partial M$,
$$g^{ij}(\nabla_ih)_{jn}= -\langle h,B_g\rangle+ div_g^{\partial M}(\omega).$$
\end{proof}

It follows from equality (\ref{e11}) and claim \ref{a4} that
$$2H'(0)+\left.X\right|_{\partial M}= -\langle h,B_g\rangle-div_g^{\partial M}(\omega).$$

Hence,
$$D\lambda_g(h) = -c_n\int_{\partial M}\langle h,B_g\rangle d\sigma_g -c_n\int_M\langle h, Ric_g \rangle dv_g - c_n\int_{\partial M}div_g^{\partial M}(\omega) d\sigma_g.$$

We conclude, since $\partial M$ is a closed manifold, that
$$D\lambda_g(h) = -c_n\int_{\partial M}\langle h,B_g\rangle d\sigma_g -c_n\int_M\langle h, Ric_g \rangle dv_g.$$
\end{proof}

\begin{proposition}
Let $(M,\partial M,g)$ be a Riemannian manifold of dimension $n\geq 3$ such that $R_{g},H_{g}\geq 0$, $Vol_{g}(M)=1$ and $\lambda(g)=0$. The metric $g$ is a critical point of the functional $\lambda$ if and only if $(M,g)$ is Ricci flat with totally geodesic boundary. 
\end{proposition}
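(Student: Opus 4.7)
The plan is to leverage the first-variation formula proved in the preceding lemma, namely
$$D\lambda_{g}(h) = -c_n\int_{\partial M}\langle h, B_g\rangle d\sigma_g - c_n\int_M\langle h, Ric_g\rangle dv_g,$$
which already assumes $\lambda(g)=0$. The first reduction is to note that $\lambda(g)=0$ combined with $\Phi_g>0$ and $R_g,H_g\geq 0$, via the identity
$$\lambda(g) = 2c_n\int_{\partial M}\Phi_g H_g d\sigma_g + c_n\int_M R_g \Phi_g dv_g,$$
forces $R_g\equiv 0$ and $H_g\equiv 0$, so in particular the conclusion that $g$ is Ricci flat with totally geodesic boundary strengthens the already known vanishing of $R_g$ and $H_g$.

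The easy direction is immediate: if $Ric_g\equiv 0$ and $B_g\equiv 0$, then the formula above makes $D\lambda_g(h)$ vanish identically in $h$, so $g$ is a critical point of $\lambda$ (in particular, under the volume normalization $Vol_g(M)=1$). For the converse I interpret ``critical point of $\lambda$'' as criticality subject to the constraint $Vol_g(M)=1$. Since $D(Vol)_g(h)=\tfrac12\int_M\langle h,g\rangle dv_g$, Lagrange multipliers produce a constant $\mu\in\RR$ such that, for every smooth symmetric $2$-tensor $h$,
$$-c_n\int_{\partial M}\langle h, B_g\rangle d\sigma_g - c_n\int_M\langle h, Ric_g\rangle dv_g = \frac{\mu}{2}\int_M\langle h, g\rangle dv_g.$$
Picking $h$ compactly supported in the interior of $M$ kills the boundary integral and, by the fundamental lemma of the calculus of variations, yields $Ric_g = -\tfrac{\mu}{2c_n}\,g$ on all of $M$. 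Tracing and invoking $R_g\equiv 0$ forces $\mu=0$, hence $Ric_g\equiv 0$. The identity then collapses to $\int_{\partial M}\langle h,B_g\rangle d\sigma_g=0$ for every $h$; choosing $h$ supported in a collar of $\partial M$ with prescribed boundary trace equal to $B_g$ gives $B_g\equiv 0$, so $\partial M$ is totally geodesic.

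The calculation is essentially mechanical once the first-variation formula is in hand, so no genuine analytic difficulty arises. The only point that requires care is the matching of formalisms: one must confirm that every smooth symmetric $2$-tensor $h$ on $M$ is realized as $g'(0)$ for a smooth family of Riemannian metrics (which holds because $g+th$ remains positive definite for $|t|$ small enough, by the convexity of the cone of metrics), and one must fix the interpretation of ``critical point'' as criticality under the normalization $Vol_g(M)=1$ so that the Lagrange-multiplier step is justified. With these conventions in place, the two implications follow directly from the formula for $D\lambda_g$.
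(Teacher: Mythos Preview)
Your proof is correct and follows the same route as the paper: both implications are read off directly from the first-variation formula $D\lambda_g(h) = -c_n\int_{\partial M}\langle h,B_g\rangle\,d\sigma_g - c_n\int_M\langle h,Ric_g\rangle\,dv_g$. The only difference is cosmetic: the paper treats ``critical point'' as the unconstrained condition $D\lambda_g\equiv 0$ (this is how the proposition is invoked in the proof of Proposition~\ref{prop31}), so no Lagrange multiplier is needed, whereas you work under the volume constraint and then show the multiplier vanishes via $R_g\equiv 0$; either way the conclusion is immediate.
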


\begin{remark}\label{rr1}
Let $(M,\partial M,g)$ be a Riemannian manifold of dimension $n\geq 3$.  Define the following pair of operators acting in $C^{\infty}(M)$:
$$
\left\{
\begin{array}{ccccccc}
L_g & = & -\Delta_g\varphi+c_nR_g\varphi \ \text{in} \ M\\
T_g & = & \displaystyle\frac{\partial\varphi}{\partial \nu}+2c_nH_g^{\partial M}\varphi \ \text{on} \ \partial M \\
\end{array}
\right.
$$
\noindent where $\nu$ denotes the outward unit normal vector of the boundary $\partial M$ in $(M,g)$ and $c_n:=\frac{(n-2)}{4(n-1)}$. Consider the first eigenvalue $\lambda_1(M,g)$ of $L_g$ with boundary condition $T_g$:

\begin{equation}\label{eee}
\displaystyle\left\{
\begin{array}{ccccccc}
L_g(\varphi) &=& \lambda_1(M,g)\varphi \ \text{in} \ M\\
T_g(\varphi) &= & 0  \ \text{on} \ \partial M \\
\end{array}
\right.
\end{equation}

We have that,

$$\lambda_1(M,g)=\inf_{0\not\equiv \varphi\in H^1(M)} \displaystyle\frac{\displaystyle\int_M(|\nabla_g\varphi|^2+c_nR_g\varphi^2)dv_g+2c_n\displaystyle\int_{\partial M}H^{\partial M}_g\varphi^2 d\sigma_g}{\displaystyle\int_{M} \varphi^2dv_g}.$$

We can choose a positive function $\varphi\in C^{\infty}(M)$ solution of (\ref{eee}).  The conformal metric $h=\varphi^{\frac{4}{n-2}}g$ is such that
$$\left\{
\begin{array}{ccccccc}
R_h &=& \lambda_1(M,g)\varphi^{-\frac{4}{n-2}} \ \text{in} \ M \\
H_h^{\partial M} &\equiv &  0 \ \text{on} \ \partial M \\
\end{array}
\right.
$$

In particular, this implies that if $\lambda_1(M,g)>0$ then $R_h>0$ and $H_h^{\partial M}\equiv 0$.
\end{remark}

\begin{proposition}\label{prop31}
Let $(M,\partial M,g)$ be a Riemannian manifold of dimension $n\geq 3$ such that  $R_g\geq0$ and $H_g\geq 0$. Then $M$ admits a metric with positive scalar curvature and minimal boundary or $(M,g)$ is Ricci flat with totally geodesic boundary.
\end{proposition}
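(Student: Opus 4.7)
The plan is to dichotomize on the sign of the first Robin eigenvalue $\lambda(g)=\lambda_1(M,g)$ introduced at the start of Section 2 and in Remark \ref{rr1}. First, using the integral identity
\[
\lambda(g)=2c_n\int_{\partial M}\Phi_g H_g\,d\sigma_g+c_n\int_M R_g\Phi_g\,dv_g,
\]
together with $\Phi_g>0$ and the hypotheses $R_g,H_g\geq 0$, one obtains $\lambda(g)\geq 0$. If $\lambda(g)>0$, then Remark \ref{rr1} immediately produces a conformal metric $\tilde g=\varphi^{4/(n-2)}g$ with $R_{\tilde g}>0$ and $H_{\tilde g}\equiv 0$, which is the first alternative of the conclusion.

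Suppose instead $\lambda(g)=0$. Since $\Phi_g>0$, the two nonnegative integrands above must vanish, so $R_g\equiv 0$, $H_g\equiv 0$, and $\Phi_g\equiv 1$ (this is the first line of the proof of the preceding Lemma). After rescaling $g$ so that $\Vol_g(M)=1$, which does not affect any of the signs above, the hypotheses of the preceding Proposition are satisfied. There are two subcases. If $g$ is a critical point of the functional $\tilde g\mapsto\lambda(\tilde g)$, that Proposition gives that $(M,g)$ is Ricci flat with totally geodesic boundary, yielding the second alternative.

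If $g$ is not a critical point, the Lemma provides a symmetric $(0,2)$-tensor $h$ with
\[
D\lambda_g(h)=-c_n\int_{\partial M}\langle h,B_g\rangle\,d\sigma_g-c_n\int_M\langle h,Ric_g\rangle\,dv_g\neq 0.
\]
After replacing $h$ by $-h$ if necessary, the smooth path $g(t):=g+th$ (which remains Riemannian for $|t|$ small) satisfies $\lambda(g(t))=t\,D\lambda_g(h)+O(t^2)>0$ for small $t>0$. Applying the first case (i.e.\ Remark \ref{rr1}) to $g(t)$ then produces a metric on $M$, conformal to $g(t)$, with positive scalar curvature and minimal boundary.

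The main delicate point is the smoothness of $\tilde g\mapsto\lambda(\tilde g)$ near $g$, needed for the Taylor expansion $\lambda(g(t))=t\,D\lambda_g(h)+O(t^2)$. This follows from the simplicity of $\lambda(g)$ as an eigenvalue of the Robin-type operator $(L_g,T_g)$, which is immediate from the positivity of the first eigenfunction $\Phi_g$, together with standard analytic perturbation theory giving smooth dependence of $(\lambda,\Phi)$ on the metric. This smoothness is also implicit in the statement of the preceding Lemma.
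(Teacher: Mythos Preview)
Your proof is correct and follows essentially the same approach as the paper: dichotomize on the sign of $\lambda(g)$, use Remark~\ref{rr1} when $\lambda(g)>0$, and when $\lambda(g)=0$ either invoke the critical-point characterization to get Ricci-flatness with totally geodesic boundary, or perturb along a path $g(t)=g+th$ with $D\lambda_g(h)>0$ to reduce to the positive case. Your explicit remark on the smooth dependence of $\lambda$ on the metric (via simplicity of the first eigenvalue and analytic perturbation theory) is a welcome clarification that the paper leaves implicit.
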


\begin{proof}
We can assume that $Vol_g(M)=1$. Hence, we obtain $\lambda(g)\geq 0$. If $\lambda(g)>0$, then there exists a metric on $M$ with positive scalar curvature and minimal boundary (See remark \ref{rr1}). 

Then, assume that $\lambda(g)=0$. If $D\lambda_g\equiv 0$, we have that $g$ is a critical point of the functional $\lambda$. Consequently,  $Ric_g\equiv 0$ and  $B_g\equiv 0$. If $D\lambda_g\not\equiv 0$, there exists a symmetric tensor $h_0\in\mathcal{T}^{(2,0)}(M)$ such that $D\lambda_g(h_0)>0$. Consider a family of metrics on $M$, $g(t)=g+th_0$, $t\in (-\epsilon,\epsilon).$ Since $\lambda'(0)=D\lambda_g(h_0)>0$, we obtain that there exists $\theta\in (0,\epsilon)$ such that the function $t\in (-\theta,\theta)\mapsto \lambda(t)\in \mathbb{R}$ is an increase function. Since $\lambda(g)=0$, we get that $\lambda(t)>0$ for all $t\in (0,\theta)$. Therefore, for each $t\in (0,\theta)$ there is a metric $\tilde{g}_t$ on $M$ such that $R_{\tilde{g}_t}>0$ and $H_{\tilde{g}_t}^{\partial M}\equiv 0$ (See remark \ref{rr1}).
\end{proof}

\begin{proposition}\label{p41}
Let $(M,\partial M,g)$ be a Riemannian manifold of dimension $n+1\geq 3$ such that $R_g>0$ and $H_g^{\partial M}\geq 0$. Then every free-boundary stable minimal hypersurface in $M$ has a metric with positive scalar curvature and minimal boundary. \end{proposition}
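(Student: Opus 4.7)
The plan is to derive the claim from the free-boundary stability inequality combined with the contracted Gauss equation, reducing everything to a positive first-eigenvalue statement so that Remark \ref{rr1} supplies the desired conformal metric on $\Sigma$. Let $\Sigma^n$ be a free-boundary stable minimal hypersurface in $M^{n+1}$, with unit normal $\nu$, second fundamental form $A_\Sigma$, and orthogonal intersection $\Sigma\perp\partial M$ along $\partial\Sigma$. The free-boundary stability inequality reads
\[
\int_\Sigma |\nabla f|^2\,dv_\Sigma \;\ge\; \int_\Sigma \bigl(|A_\Sigma|^2+Ric_g(\nu,\nu)\bigr) f^2\,dv_\Sigma + \int_{\partial\Sigma} B_g(\nu,\nu)\, f^2\,d\sigma_\Sigma
\]
for every $f\in C^\infty(\Sigma)$, where $B_g$ is the second fundamental form of $\partial M$ in $M$.

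Two identities feed into this inequality. Since $H_\Sigma=0$, the contracted Gauss equation gives
\[
|A_\Sigma|^2+Ric_g(\nu,\nu) = \tfrac12(R_g-R_\Sigma)+\tfrac12|A_\Sigma|^2.
\]
The free-boundary condition forces the outward conormal of $\partial\Sigma$ in $\Sigma$ to coincide with the outward normal of $\partial M$ in $M$, so tracing $B_g$ on the orthogonal splitting $T_p(\partial M) = T_p(\partial\Sigma)\oplus\langle\nu\rangle$ at each $p\in\partial\Sigma$ yields
\[
B_g(\nu,\nu) = H^{\partial M}_g - H^{\partial\Sigma}_\Sigma,
\]
where $H^{\partial\Sigma}_\Sigma$ denotes the mean curvature of $\partial\Sigma$ inside $\Sigma$.

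Assume first that $n\ge 3$, so $c_n=\frac{n-2}{4(n-1)}\in(0,\tfrac12)$. Substituting the two identities into stability, multiplying through by $2c_n$, then enlarging the left-hand side by the nonnegative quantity $(1-2c_n)\int_\Sigma|\nabla f|^2\,dv_\Sigma$ while discarding the nonnegative terms $c_n\int_\Sigma|A_\Sigma|^2 f^2$ and $2c_n\int_{\partial\Sigma}H^{\partial M}_g f^2$ on the right produces
\[
\int_\Sigma \bigl(|\nabla f|^2+c_n R_\Sigma f^2\bigr)\,dv_\Sigma + 2c_n\!\int_{\partial\Sigma} H^{\partial\Sigma}_\Sigma f^2\,d\sigma_\Sigma \;\ge\; c_n\int_\Sigma R_g f^2\,dv_\Sigma.
\]
Because $R_g>0$ on the compact set $\overline{\Sigma}$, the right-hand side dominates a positive multiple of $\int_\Sigma f^2\,dv_\Sigma$, so the first eigenvalue of the operator $L_{g_\Sigma}$ with boundary condition $T_{g_\Sigma}$ from Remark \ref{rr1} is strictly positive. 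Remark \ref{rr1} then supplies a conformal metric on $\Sigma$ with positive scalar curvature and minimal boundary, as required. The borderline case $n=2$ must be treated separately: testing stability with $f\equiv 1$ and applying the Gauss-Bonnet theorem to $\Sigma$ yields $\chi(\Sigma)>0$, so $\Sigma$ is a disk and admits an obvious spherical-cap metric with positive Gaussian curvature and geodesic boundary.

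The main technical points I expect to require care are justifying the boundary identity $B_g(\nu,\nu) = H^{\partial M}_g - H^{\partial\Sigma}_\Sigma$ from the orthogonality $\Sigma\perp\partial M$ and tracking signs when rewriting the stability inequality; after these are settled the conclusion follows essentially formally from Remark \ref{rr1}.
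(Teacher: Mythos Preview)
Your proposal is correct and follows essentially the same route as the paper: stability inequality, contracted Gauss equation, the boundary identity $B_g(\nu,\nu)=H_g^{\partial M}-H_\Sigma^{\partial\Sigma}$, and then the algebraic rearrangement with the constant $c_n$ to conclude $\lambda_1(\Sigma)>0$ and invoke Remark~\ref{rr1}. The only difference is cosmetic: the paper drops the $R_g$ and $|A_\Sigma|^2$ terms at the outset (keeping only the strict inequality they induce) and ends with the lower bound $(1-2c_n)\int_\Sigma|\nabla\varphi|^2$, whereas you retain $c_n\int_\Sigma R_g f^2$ explicitly and use $\min R_g>0$; your version gives a quantitative lower bound for $\lambda_1$ and makes the passage to the infimum cleaner. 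You also treat the borderline case $\dim\Sigma=2$ via Gauss--Bonnet, which the paper's proof does not address even though the stated hypothesis $n+1\ge 3$ formally allows it.
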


\begin{proof}
Consider $\Sigma$ a free-boundary stable minimal in $M$. It follows from the second variation formula for the volume that
\[
\int_{\Sigma}|\nabla \varphi|^2dv_g\geq \int_{\Sigma} \varphi^2(Ric_g(N,N)+|B_g^{\Sigma}|^2)dv_g+\int_{\partial\Sigma}\varphi^2B_g^{\partial M}(N,N)d\sigma_g
\]
for every $\varphi\in C^{\infty}(\Sigma)$, where $N$ denotes a unit vector field on $\Sigma$ in $(M,g)$. As $R_g>0$, it follows from the Gauss equation that
\[
Ric_g(N,N)+|B_g^{\Sigma}|^2=\frac{1}{2}(R_g-R_g^{\Sigma}+|B_g^{\Sigma}|^2)>-\frac{1}{2}R_g^{\Sigma}.
\]
Hence,
\[
\int_{\Sigma}|\nabla \varphi|^2dv_g> -\frac{1}{2}\int_{\Sigma} \varphi^2R_g^{\Sigma}dv_g+\int_{\partial\Sigma}\varphi^2B_g^{\partial M}(N,N)d\sigma_g\,,
\]
for every $\varphi\in C^{\infty}(\Sigma)$. Since $H_g^{\partial M}\geq 0$, and $\Sigma$ is a free-boundary hypersurface in $(M,g)$, we obtain 
\[
B_g^{\partial M}(N,N)=H_g^{\partial M}-H_g^{\partial \Sigma}\geq -H_g^{\partial \Sigma}.
\]
Thus, 
\[
\int_{\Sigma}|\nabla \varphi|^2dv_g> -\frac{1}{2}\int_{\Sigma} \varphi^2R_g^{\Sigma}dv_g-\int_{\partial\Sigma}\varphi^2H_g^{\partial \Sigma}d\sigma_g\,,
\]
for every $\varphi\in C^{\infty}(\Sigma)$. Consequently,
\[
\int_{\Sigma}|\nabla \varphi|^2dv_g+c_n\int_{\Sigma} \varphi^2R_g^{\Sigma}dv_g+2c_n\int_{\partial\Sigma}\varphi^2H_g^{\partial \Sigma}d\sigma_g>(1-2c_n)\int_{\Sigma}|\nabla \varphi|^2dv_g\,,
\]
for every $0\not\equiv\varphi\in H^1(\Sigma)$, where $c_n=\frac{n-2}{4(n-1)}$. It follows that
\[
\lambda=\inf_{0\not\equiv\varphi\in H^1(\Sigma)}\displaystyle\frac{\displaystyle\int_{\Sigma}|\nabla \varphi|^2dv_g+c_n\displaystyle\int_{\Sigma} \varphi^2R_g^{\Sigma}dv_g+2c_n\displaystyle\int_{\partial\Sigma}\varphi^2H_g^{\partial \Sigma}d\sigma_g}{\displaystyle\int_{\Sigma}\varphi^2dv_g}>0.
\]
Therefore, there exists a metric on $\Sigma$ with positive scalar curvature and minimal boundary (See remark \ref{rr1}).
\end{proof}

\section{3-dimensional case}

Let $M$ be a smooth 3-dimensional manifold. A sphere which is either properly embedded in $M$ or contained in $\partial M$ is said to be incompressible if it does not bound a ball in $M$. A connected surface $\Sigma$  which is either properly embedded in $M$ or contained in $\partial M$ and which is not a sphere, is said to be incompressible if the homomorphism $\pi_1(\Sigma)\hookrightarrow \pi_1(M)$ is injective. A connected embedded surface $(\Sigma,\partial \Sigma)\subset (M,\partial M)$ is said to be $\partial$-incompressible if the homomorphism $\pi_1(\Sigma,\partial\Sigma)\hookrightarrow \pi_1(M,\partial M)$ is injective. A properly embedded connected surface $(\Sigma,\partial \Sigma)\subset (M,\partial M)$ is said to be {\it essential} if it is incompressible and $\partial$-incompressible.

\begin{theorem}[Chen, Fraser e Pang, \cite{Fraser}]\label{t32}
Let $(M,\partial M, g)$ be a Riemannian 3-dimensional manifold. If $(\Sigma,\partial \Sigma)$ is a connected surface which is not a disk and $f:(\Sigma,\partial \Sigma)\rightarrow (M,\partial M)$ is a continuos map such that $$f_*:\pi_1(\Sigma)\rightarrow\pi_1(M) \ \ \text{e} \ \  f_*^{\partial}: \pi_1(\Sigma,\partial\Sigma)\rightarrow\pi_1(M,\partial M),$$\noindent are injectives, then there exists a free-boundary minimal immersion $F:(\Sigma,\partial \Sigma)\rightarrow (M,\partial M)$  and it minimizes area among the maps $h:(\Sigma,\partial \Sigma)\rightarrow (M,\partial M)$ such that $h_*$ and $h_*^{\partial}$ are injectives.
\end{theorem}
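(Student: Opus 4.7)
The plan is to adapt the classical Sacks--Uhlenbeck/Schoen--Yau strategy for producing incompressible minimal surfaces to the free-boundary setting. First, I would set up the variational problem: consider the class $\mathcal{F}$ of Lipschitz maps $h:(\Sigma,\partial\Sigma)\to(M,\partial M)$ whose absolute and relative $\pi_1$-homomorphisms are injective, together with the induced conformal structures on $\Sigma$. The goal is to produce a minimizer of the area functional in this class, which will automatically be a conformal harmonic immersion with free boundary on $\partial M$, hence minimal with $\partial F(\Sigma)\perp \partial M$.

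Next, following Sacks--Uhlenbeck, I would fix a conformal structure $j$ on $\Sigma$ and replace the Dirichlet energy by the perturbed functional $E_\alpha(u)=\int_\Sigma (1+|du|^2)^\alpha\, dv_j$ for $\alpha>1$. With the natural boundary constraint $u(\partial\Sigma)\subset\partial M$ (a manifold constraint, not a Dirichlet condition), $E_\alpha$ is Palais--Smale and admits a minimizer $u_\alpha$ in each relative homotopy class. I would then let $\alpha\to 1$ and extract a limiting harmonic map $u_\infty$ with free boundary on $\partial M$, possibly with a finite set of bubbles: harmonic spheres $S^2\to M$ at interior concentration points and harmonic half-disks $(D^2,\partial D^2)\to(M,\partial M)$ with free boundary at boundary concentration points. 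Simultaneously, minimize over the moduli of conformal structures on $\Sigma$; the $\pi_1$-injectivity will prevent the conformal structure from degenerating in the Deligne--Mumford sense, since any collapsing curve would produce a nontrivial loop in the kernel of $h_*$ or $h_*^{\partial}$.

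The crux is the bubble removal. Here the hypotheses enter decisively: any interior sphere bubble represents a class in $\pi_2(M)$, which by the Hurewicz/Whitehead picture and $h_*$-injectivity must be captured by the limit; a standard energy comparison then shows the bubble carries positive energy while its removal decreases area, giving a contradiction unless the bubble is trivial. Similarly, any half-disk bubble represents a class in $\pi_2(M,\partial M)$, and the injectivity of $h_*^{\partial}:\pi_1(\Sigma,\partial\Sigma)\to\pi_1(M,\partial M)$, via the long exact sequence of the pair, forces such a bubble to be removable. The hypothesis that $\Sigma$ is not a disk is what makes the relative injectivity condition meaningful (a disk has trivial relative $\pi_1$) and rules out the degenerate case where the minimizer collapses to a constant map into $\partial M$.

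Finally, once bubbling and conformal degeneration are excluded, the limit $F:\Sigma\to M$ is a weakly conformal harmonic map with free boundary, and by standard regularity (Gulliver--Jost, Grüter--Jost for the boundary behavior) it is smooth up to $\partial\Sigma$ and hence a branched minimal immersion meeting $\partial M$ orthogonally. A further argument using the injectivity hypotheses on $F_*$ and $F_*^{\partial}$ (which are preserved in the limit) rules out branch points, yielding a genuine free-boundary minimal immersion that minimizes area in the class. I expect the most delicate step to be the simultaneous control of bubbling and conformal degeneration near $\partial\Sigma$, since the free-boundary setting introduces half-neck and half-bubble configurations that require a careful boundary version of the energy identity.
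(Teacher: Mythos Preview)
The paper does not prove this statement at all: Theorem~\ref{t32} is stated as a result of Chen, Fraser and Pang \cite{Fraser} and is merely quoted, with no proof given in the text. So there is no ``paper's own proof'' to compare against.

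That said, your outline is a reasonable sketch of the strategy actually used in \cite{Fraser}: minimize a perturbed energy over maps and conformal structures, control bubbling via the $\pi_1$- and relative $\pi_1$-injectivity hypotheses, prevent degeneration of the conformal structure by the same hypotheses, and appeal to boundary regularity to get a free-boundary minimal immersion. One caution: your claim that the injectivity hypotheses ``rule out branch points'' is too strong. In dimension two the Schoen--Yau/Sacks--Uhlenbeck method produces a priori only a \emph{branched} minimal immersion, and in the free-boundary case Chen--Fraser--Pang likewise obtain a possibly branched map; unbranched immersion is not asserted and is not needed for the applications in this paper (stability and the second-variation argument in Theorem~\ref{t31} work for branched immersions). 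Otherwise your plan matches the original source in spirit, but for the purposes of this paper the correct answer is simply to cite \cite{Fraser}.
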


\begin{theorem}[Chen, Fraser e Pang, \cite{Fraser}]\label{t31}
Let $(M,\partial M, g)$ be a Riemannian 3-dimensional manifold such that $H_g\geq 0$. If $(\Sigma,\partial \Sigma)$ is a connected surface and $f:(\Sigma,\partial \Sigma)\rightarrow (M,\partial M)$ is a free-boundary, minimal and stable immersion, then  
\begin{enumerate}
\item[(1)] If $R_g>0$, we obtain that $\Sigma$ is a disk.
\item[(2)] If $R_g\geq 0$, we obtain that either $\Sigma$ is a disk or $(\Sigma,g)$ is a flat cylinder with totally geodesic boundary.
\end{enumerate}
\end{theorem}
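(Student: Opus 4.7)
The plan is to test the free-boundary stability inequality against the constant function $\varphi\equiv 1$, convert the bulk and boundary integrands via the Gauss equation and the free-boundary decomposition along $\partial\Sigma$, and then close out with Gauss-Bonnet.

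Since $\Sigma^2$ is minimal, the Gauss equation in dimension two reads
\[
2K_\Sigma = R_g - 2\,Ric_g(N,N) - |B_g^\Sigma|^2,
\]
so $Ric_g(N,N) + |B_g^\Sigma|^2 = \tfrac12 R_g + \tfrac12 |B_g^\Sigma|^2 - K_\Sigma$. Along $\partial\Sigma$, the free-boundary condition $\Sigma\perp\partial M$ splits $T(\partial M) = \mathrm{span}(\tau)\oplus\mathrm{span}(N)$ at each boundary point, with $\tau$ the unit tangent to $\partial\Sigma$, giving
\[
H_g^{\partial M} = B_g^{\partial M}(N,N) + k_g^{\partial\Sigma},
\]
where $k_g^{\partial\Sigma}$ is the geodesic curvature of $\partial\Sigma$ inside $(\Sigma,g|_\Sigma)$. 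Substituting these two identities into the stability inequality from Proposition~\ref{p41} with $\varphi\equiv 1$ and invoking Gauss-Bonnet $\int_\Sigma K_\Sigma + \int_{\partial\Sigma} k_g^{\partial\Sigma} = 2\pi\chi(\Sigma)$ reorganises everything into
\[
2\pi\chi(\Sigma) \,\geq\, \tfrac12\int_\Sigma R_g\,d\sigma_g + \tfrac12\int_\Sigma |B_g^\Sigma|^2\,d\sigma_g + \int_{\partial\Sigma} H_g^{\partial M}\,ds_g.
\]

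If $R_g>0$, the right-hand side is strictly positive, so $\chi(\Sigma)\geq 1$; since $\partial\Sigma\neq\emptyset$, this pins $\Sigma$ down as a disk, giving (1). If only $R_g\geq 0$, then $\chi(\Sigma)\geq 0$, so $\Sigma$ is either a disk or an annulus. In the annulus case $\chi=0$ all the inequalities above saturate, which immediately yields $R_g\equiv 0$ on $\Sigma$, $B_g^\Sigma\equiv 0$, and $H_g^{\partial M}\equiv 0$ along $\partial\Sigma$. To upgrade from integrated to pointwise vanishing of the intrinsic curvature, I would use that $\varphi\equiv 1$ now realises the infimum in the Rayleigh quotient of the Jacobi operator with its natural Robin boundary data, so it is a first eigenfunction with eigenvalue zero; its Euler-Lagrange equations force $Ric_g(N,N)+|B_g^\Sigma|^2\equiv 0$ on $\Sigma$ and $B_g^{\partial M}(N,N)\equiv 0$ on $\partial\Sigma$. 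Combined with the previous pointwise equalities, these yield $K_\Sigma\equiv 0$ on $\Sigma$ and $k_g^{\partial\Sigma}\equiv 0$ on $\partial\Sigma$, so $(\Sigma,g|_\Sigma)$ is flat with geodesic boundary, i.e.\ a flat cylinder.

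The delicate step is the equality case: extracting pointwise flatness from what is, a priori, only an integrated Gauss-Bonnet identity really requires running the Jacobi eigenvalue equation with its Robin boundary condition, rather than simply re-reading the integrated identities. The other place where care is needed is the sign convention in the splitting $H_g^{\partial M}=B_g^{\partial M}(N,N)+k_g^{\partial\Sigma}$, so that the mean-convexity hypothesis $H_g^{\partial M}\geq 0$ enters the final inequality with the correct orientation; everything else is a standard manipulation.
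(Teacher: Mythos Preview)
The paper does not supply its own proof of this theorem; it is quoted as a result of Chen--Fraser--Pang \cite{Fraser} and used as a black box. Your argument is the standard one and is essentially what appears in that reference: plug $\varphi\equiv 1$ into the free-boundary second variation inequality (which is recorded in the proof of Proposition~\ref{p41}), rewrite via the Gauss equation and the orthogonal splitting $H_g^{\partial M}=B_g^{\partial M}(N,N)+k_g^{\partial\Sigma}$, and collapse to the Gauss--Bonnet inequality
\[
2\pi\chi(\Sigma)\;\geq\;\tfrac12\int_\Sigma R_g + \tfrac12\int_\Sigma |B_g^\Sigma|^2 + \int_{\partial\Sigma} H_g^{\partial M}.
\]
Your treatment of the rigidity case is also correct: once $\chi(\Sigma)=0$ forces equality, the nonnegativity of each integrand gives $R_g=0$, $B_g^\Sigma=0$, $H_g^{\partial M}=0$ pointwise, and the observation that $\varphi\equiv 1$ is then a first eigenfunction of the Jacobi problem with eigenvalue zero yields the pointwise Euler--Lagrange equations $Ric_g(N,N)+|B_g^\Sigma|^2=0$ and $B_g^{\partial M}(N,N)=0$, from which $K_\Sigma\equiv 0$ and $k_g^{\partial\Sigma}\equiv 0$ follow. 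There is nothing to add.
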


Define $\tilde{\mathcal{C}}_3$ as the set of all smooth $3$-dimensional manifolds $(M,\partial M)$ such that there is no continuous map $f:(\Sigma,\partial \Sigma)\rightarrow (M,\partial M)$ with $f_*$ and $f_*^{\partial}$ are injectives, where $(\Sigma,\partial \Sigma)$ is a connected surface with genus $l$ and $k$ boundary components satisfying: $l=0$ and $k\geq 3$ or $l\geq 1$ and $k\geq 1$.
\begin{remark} As a consequence of the Theorems \ref{t31} e \ref{t32} we have that if a $3$-dimensional manifold $(M,\partial M)$ is such that $M\not\in\tilde{\mathcal{C}}_3$, then there is no metric on $M$ with non-negative scalar curvature and mean convex boundary. 
\end{remark}
Define $\overline{\mathcal{C}}_3$ as the set of all smooth $3$-manifolds $(M,\partial M)$ such that there is no continuous map $f:(\Sigma,\partial \Sigma)\rightarrow (M,\partial M)$ with $f_*$ and $f_*^{\partial}$ injectives, where $(\Sigma,\partial \Sigma)$ is a connected surface with genus $l$ and $k$ boundary components satisfying: $l=0$ and $k\geq 2$ or $l\geq 1$ and $k\geq 1$.

\begin{example} Consider the solid torus $M=\mathbb{S}^1\times \mathbb{D}^2$. Note that, $\pi_1(M,\partial M)=0$. It follows that,
$M\in\overline{\mathcal{C}}_3$. 
\end{example}

\begin{example}
Consider the 3-dimensional manifold $M=\mathbb{S}^1\times \Sigma$, where $(\Sigma,\partial \Sigma)$ is a connected surface which is not a disk. Note that $\Sigma$ is essential in $M$. Therefore, $M\not\in\overline{\mathcal{C}}_3$. 
\end{example}

\begin{example}
Consider the 3-dimensional manifold $M=I\times \mathbb{S}^2$. Since $M$ is simply connected, we have that $M\in\overline{\mathcal{C}}_3$.
\end{example} 

\begin{example}\label{ee2}
Consider the 3-dimensional manifold $M=I\times S$, where $S$ is a closed surface with positive genus. Let $\Sigma=I\times\gamma$, where $\gamma$ is a closed curve which represents a non-trivial class in $\pi_1(S)$ and bounds a "hole" in $S$. Note that $\Sigma$ is an essential cylinder which is properly embedded in $M$. Therefore, $M\not\in\overline{\mathcal{C}}_3$.
\end{example}

\begin{lemma}\label{lema1}
Let $(M,\partial M, g)$  be a connected Riemannian 3-dimensional manifold such that $g$ is flat with totally geodesic boundary. Then, $M$ is covered by $I\times T^2$. In particular, $M\not\in\overline{\mathcal{C}}_3$.
\end{lemma}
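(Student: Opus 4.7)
My plan is to double $M$ along its totally geodesic boundary to produce a closed flat 3-manifold, apply Bieberbach's theorem, and then extract the cover $I\times T^2$ by quotienting the universal cover by only the ``horizontal'' sublattice of translations. So first, I will form $DM := M\cup_{\partial M} M$; since $\partial M$ is totally geodesic and $g$ is flat, the doubled metric on $DM$ is smooth and flat, so $DM$ is a closed flat 3-manifold. Bieberbach's theorem then gives that $\pi_1(DM) \subset \mathrm{Isom}(\mathbb{R}^3)$ contains a finite-index translation subgroup $\Lambda \cong \mathbb{Z}^3$ acting cocompactly on the universal cover $p: \mathbb{R}^3 \to DM$.

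Next, I will analyze $p^{-1}(\partial M)$. Each component of $\partial M$ is a compact totally geodesic flat surface in $DM$, so its preimage in $\mathbb{R}^3$ is a disjoint union of affine $2$-planes. Distinct components of $\partial M$ are disjoint in $DM$, so their preimages are disjoint in $\mathbb{R}^3$; since any two disjoint affine planes in $\mathbb{R}^3$ must be parallel, all planes in $p^{-1}(\partial M)$ share a common normal direction. Choosing coordinates so this direction is the $z$-axis, $p^{-1}(\partial M) = \bigsqcup_i\{z = a_i\}$ for a discrete, bi-infinite subset $\{a_i\}\subset \mathbb{R}$ (discreteness from local finiteness of the $\Lambda$-orbit, bi-infiniteness from cocompactness of $\Lambda$ together with $\partial M\neq\emptyset$).

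Now let $\Lambda_0 := \{v\in \Lambda : v_z = 0\}$. The $z$-projection of $\Lambda$ is a nontrivial discrete subgroup of $\mathbb{R}$ (it acts on the discrete set $\{a_i\}$, so cannot be dense, and it is nonzero by cocompactness of $\Lambda$), so $\Lambda_0$ has rank $2$ and acts as a lattice on each horizontal plane. The intermediate quotient $\mathbb{R}^3/\Lambda_0 \cong \mathbb{R}\times T^2$ thus inherits a covering to $DM$; in it the preimage of $\partial M$ becomes $\bigsqcup_i \{a_i\}\times T^2$, and the preimage of $M$ decomposes as an alternating disjoint union of closed slabs $[a_i,a_{i+1}]\times T^2 \cong I\times T^2$. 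Each such slab is a connected component of the preimage of the connected space $M$, so restricting the covering to one slab yields the desired (necessarily finite) cover $I\times T^2 \to M$.

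For the final assertion, I will push the obvious essential cylinder $\Sigma_0 := I\times S^1\times \{*\} \subset I\times S^1\times S^1 = I\times T^2$ down to $M$ via the covering map $\pi$, obtaining $f : (\Sigma_0,\partial\Sigma_0) \to (M,\partial M)$. Injectivity of $f_*$ is immediate since $\pi_1(\Sigma_0) \hookrightarrow \pi_1(I\times T^2) \hookrightarrow \pi_1(M)$ is a composition of injections. For $f_*^\partial$, a standard path-lifting argument works: a rel-endpoint homotopy of $f(I\times\{*\})$ into $\partial M$ would lift to a homotopy of $I\times\{*\}$ into $\partial(I\times T^2)$, which is impossible because the two endpoints of $I\times\{*\}$ lie on the two disjoint components $\{0\}\times T^2$ and $\{1\}\times T^2$. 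The technical heart is step two: combining the Bieberbach structure with the geometric fact that disjoint totally geodesic hyperplanes in $\mathbb{R}^3$ are parallel; once the planes are aligned, the rest is bookkeeping on $\Lambda_0$.
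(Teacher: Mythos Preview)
Your argument is correct and takes a genuinely different route from the paper. The paper invokes two results of Meeks--Simon--Yau: first Theorem~5 of \cite{MSY}, which gives the dichotomy that $M$ is either a handlebody or covered by $I\times T^2$, and then Theorem~8 of \cite{MSY} to rule out the handlebody case (by arguing that the boundary would be a stable flat minimal torus in a solid torus). You instead double $M$ to a closed flat $3$-manifold, apply Bieberbach's theorem, and then use the elementary geometric fact that the lifts of $\partial M$ to $\mathbb{R}^3$ are pairwise disjoint---hence parallel---affine planes; quotienting one slab by the rank-two ``horizontal'' sublattice $\Lambda_0$ yields the cover $I\times T^2\to M$ directly. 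Your approach is more self-contained (it rests only on the classical Bieberbach theorem rather than on the minimal-surface machinery of \cite{MSY}) and makes the covering completely explicit; the paper's route is shorter on the page but outsources the work to the cited theorems. For the final assertion that $M\notin\overline{\mathcal{C}}_3$, both proofs push the obvious cylinder down via the covering; you supply the path-lifting justification for injectivity of $f_*^{\partial}$ (using that the two boundary tori of $I\times T^2$ are in different components), which the paper simply asserts.
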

\begin{proof} It follows from the theorem $5$ in \cite{MSY} that either $M$ is diffeomorphic to a $3$-dimensional handlebody or $M$ is covered by $I\times T^2$. Since $(M,g)$ is flat with totally geodesic boundary, we have that $(\partial M, g)$ is a flat surface. Assume the $M$ is a $3$-dimensional handlebody. In this case, we have that $\partial M$ is connected. It follows from the Gauss-Bonnet theorem that $\partial M$ is a $2$-dimensional torus. This implies that $M=\mathbb{S}^1\times \mathbb{D}^2$ and $\partial M$ is a stable minimal flat torus in $(M,g)$. But, this is a contradiction (see Theorem $8$ in (\cite{MSY}). Therefore,  $M$ is covered by $I\times T^2$. Consider then $p:I\times T^2\rightarrow M$ a covering map and $C$ an essential cylinder which is properly embedded in $I\times T^2$. Define $f=p\circ i:(C,\partial C)\rightarrow (M,\partial M)$, where $i:C\rightarrow I\times T^2$ is the inclusion map. Note that $f_*$ and $f_*^{\partial}$ are injective. Therefore, $M\not\in\overline{\mathcal{C}}_3$.
\end{proof}
\begin{theorem}\label{pp11}
Let $(M,\partial M)$ be a smooth 3-dimensional manifold. Assume that the connected components of $\partial M$ are spheres or incompressible tori, but at least one of the components is a torus. Then $M\not\in\overline{\mathcal{C}}_3$. However, if the number of the incompressible tori in $\partial M$ is exactly one, then $M\not\in\tilde{\mathcal{C}}_3$.
\end{theorem}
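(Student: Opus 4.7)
The plan is to construct a properly embedded two-sided essential (incompressible and $\partial$-incompressible) surface $\Sigma \subset M$ of the required topological type. By the standard equivalence between geometric essentiality of such a surface and injectivity of the induced maps on $\pi_1$ and on the pair's $\pi_1$, such a $\Sigma$ immediately witnesses the desired non-membership in $\overline{\mathcal{C}}_3$ or $\tilde{\mathcal{C}}_3$. The construction proceeds through homology and the theory of the Thurston norm.

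First, I would use Poincar\'e--Lefschetz duality to find a convenient class in $H_2(M,\partial M;\mathbb{Q})$. The classical ``half-lives-half-dies'' lemma says that the kernel of the inclusion $H_1(\partial M;\mathbb{Q}) \to H_1(M;\mathbb{Q})$ has dimension $\tfrac{1}{2}\dim H_1(\partial M;\mathbb{Q})$. Since $\partial M$ contains an incompressible torus $T$ (contributing dimension two) while every other component is a sphere (contributing zero), this kernel has dimension at least one. Chasing through the long exact sequence of the pair then produces a nonzero class $\alpha \in H_2(M,\partial M;\mathbb{Q})$ whose boundary $\partial\alpha \in H_1(\partial M;\mathbb{Q})$ is nonzero and supported on the torus components; in the single-torus case $\partial\alpha$ is nonzero already in $H_1(T;\mathbb{Q})$.

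Next, I would apply Gabai's theorem on Thurston-norm-minimizing representatives to produce a properly embedded oriented surface $\Sigma\subset M$ representing $\alpha$, whose components are incompressible and $\partial$-incompressible. Because $T$ is incompressible, each boundary circle of $\Sigma$ on $T$ is essential in $T$, so $\Sigma$ has no disk components. For the first conclusion ($M\not\in\overline{\mathcal{C}}_3$) I would then take any component $\Sigma_0$ of $\Sigma$ with nonempty boundary; being neither a disk nor a sphere, $\Sigma_0$ has either $l=0,k\geq 2$ or $l\geq 1,k\geq 1$. For the refined conclusion in the single-torus case I would further rule out annuli. The key observation is orientation-theoretic: an oriented properly embedded annulus with both boundary circles on $T$ has its two boundary classes cancel in $H_1(T;\mathbb{Q})$, since they are parallel copies of a core curve inheriting opposite induced orientations. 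Therefore the component of $\Sigma$ carrying the nonzero boundary class $\partial\alpha$ is neither an annulus, nor a disk (whose boundary would be null-homotopic in $T$ by incompressibility, hence null-homologous), nor a closed torus (no boundary). Such a component has $-\chi>0$, forcing $(l\geq 1,k\geq 1)$ or $(l=0,k\geq 3)$, and hence $M\not\in\tilde{\mathcal{C}}_3$.

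The main technical obstacle will be the appeal to Gabai's theorem, which is cleanest when $M$ is irreducible. If $M$ contains essential $2$-spheres or trivial sphere boundary components, I would first prime-decompose or cap off with $3$-balls, noting that $T$ persists in the irreducible piece containing it and that an essential surface with boundary on $T$ in that piece remains essential when viewed in $M$. A secondary subtlety is ensuring $\partial$-incompressibility of the norm-minimizing representative, which is handled by the standard observation that any $\partial$-compression can be performed without changing the homology class or increasing $-\chi$.
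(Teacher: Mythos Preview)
For the first conclusion ($M\notin\overline{\mathcal{C}}_3$) your plan is sound and runs parallel to the paper's. The paper simply invokes Hempel's Lemma~6.8---whose proof is essentially the half-lives-half-dies computation you spell out---to obtain a two-sided incompressible surface $\Sigma$ with $0\neq[\partial\Sigma]\in H_1(\partial M)$, and then uses Hatcher's Lemma~1.10 to upgrade $\Sigma$ to $\partial$-incompressibility after disposing of the boundary-compressible annulus case. Your route through the Thurston norm and Gabai's theorem delivers incompressibility and $\partial$-incompressibility in one stroke, at the cost of the prime-decomposition preprocessing you mention; both arguments then exclude disk components via incompressibility of the boundary torus, giving $M\notin\overline{\mathcal{C}}_3$.

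The single-torus refinement, however, has a genuine gap: your claim that a properly embedded annulus $A$ with $\partial A\subset T$ always has $[\partial A]=0$ in $H_1(T;\mathbb{Q})$ is false. Take $M$ to be the orientable twisted $I$-bundle over the Klein bottle, modeled as $T^2\times[-1,1]$ modulo $(x,y,t)\sim(x+\tfrac12,-y,-t)$. Then $\partial M$ is a single incompressible torus, and $A=\{x=0\}$ is a properly embedded, incompressible, $\partial$-incompressible annulus with $[\partial A]=(0,\pm 2)\neq 0$: its two boundary circles are parallel on $T$, but the gluing flips the $y$-coordinate, so the induced boundary orientations \emph{agree} rather than cancel. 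Worse, $\pi_1(M)$ is the Klein-bottle group, which is virtually $\mathbb{Z}^2$ and hence contains no nonabelian free subgroup; therefore no surface of negative Euler characteristic maps $\pi_1$-injectively into $M$, and in fact $M\in\tilde{\mathcal{C}}_3$. Thus the second assertion of the theorem already fails for this $M$, and the paper's own one-line justification (``the essential surface $\Sigma$ can not be a cylinder'') does not survive this example either.
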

\begin{proof} First, $M$ contains a  properly embedded, connected and incompressible surface $(\Sigma,\partial \Sigma)$ such that $0\not=[\partial\Sigma]\in H_1(\partial M)$ (see lemma 6.8 in \cite{Hempel}). 

\begin{Claim} $\Sigma$ is not a disk. 
\end{Claim}
In fact, assume that $\Sigma$ is a disk. As  $0\not=[\partial\Sigma]\in H_1(\partial M)$, we have that $\partial \Sigma$ represent a non-trivial class in $\pi_1(\partial M)$. Hence, $\partial\Sigma$ are in a connected component $T$ of $\partial M$ which is a torus (see the figure \ref{f2}).
\begin{figure}[H]
\centering
\includegraphics[scale=0.8]{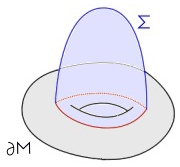}
\caption{$[\partial \Sigma]\in\pi_1(\partial M)$ is not trivial}
\label{f2}
\end{figure}

Note that the inclusion $i:\Sigma\rightarrow M$ represents a non-trivial class of $\pi_2(M,T)$. As $T$ is a   incompressible torus in $M$, we have that $\pi_1(T)\hookrightarrow \pi_1(M)$ is injective. Thus, we have the following exact sequence 
\[
\cdots\rightarrow \pi_2(T)\rightarrow \pi_2(M)\rightarrow \pi_2(M,T)\rightarrow 0.
\]

This implies that the map $\pi_2(M)\hookrightarrow \pi_2(M,T)$ is onto. Hence, there exists $f:\Sigma\rightarrow M$ with $f(\partial \Sigma)=x_0\in T$ and $0\not=[f]\in\pi_2(M)$ such that $[i]=[f]$ in $\pi_2(M,T)$. Consequently, $\partial \Sigma$ represents a non-trivial class in $\pi_1(T)$. But this is a contradiction. Therefore, $\Sigma$ is not a disk.

As $\Sigma$ is an incompressible surface, which is not a disk, we have that each connected component of $\partial\Sigma$ represents a non-trivial class in $\pi_1(\partial M)$. This implies that $\partial \Sigma$  is contained in the union of the connected components of $\partial M$ whose are torus. Hence, either $\Sigma$ is $\partial$-incompressible or it is a cylinder $\partial$-compressible (See lemma $1.10$ in \cite{Hacther}). 

\begin{Claim} $\Sigma$ is not a $\partial$-compressible cylinder.
\end{Claim}
In fact, suppose that $\Sigma$ is a $\partial$-compressible cylinder. In this case, we have that  connected components $\alpha_1$ and $\alpha_2$ of $\partial \Sigma$ are contained in a same torus of $\partial M$. Consequently, we have only two possible situation for the circles $\alpha_1$ and $\alpha_2$, as we can see in the figures below. 
\begin{figure}[H]
\centering
\includegraphics[scale=0.6]{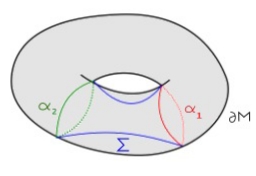}
\includegraphics[scale=0.3]{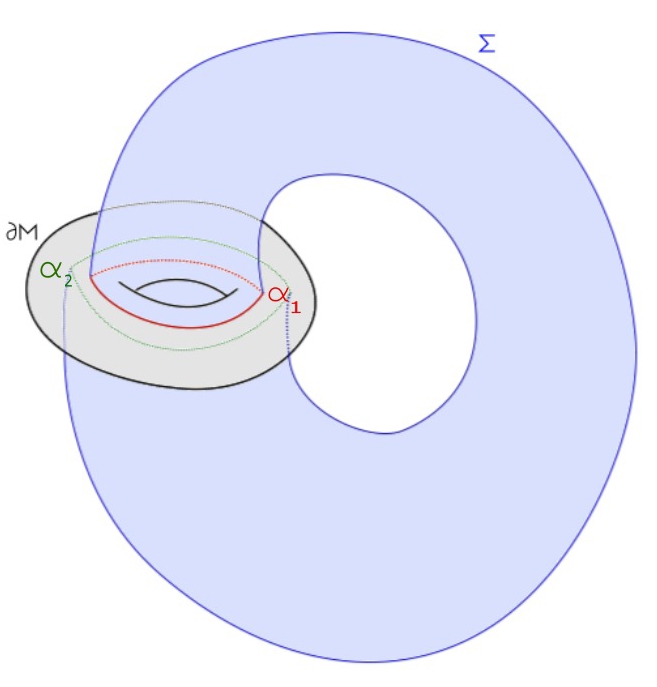}
\end{figure}
Note that in both situation we have that $\alpha_1$ and $\alpha_2$ are homologous in $\partial M$. This implies that $\partial\Sigma$ represent the trivial class in $H_1(\partial M)$. But this is a contradiction. Therefore, $\Sigma$ is not a $\partial$-compressible cylinder. 

Hence, $\Sigma$ is an essential surface in $M$ which is not a disk. Therefore, $M\not\in\overline{\mathcal{C}}_3$. However, note that if the number of the incompressible tori in $\partial M$ is exactly one, then the essential surface $\Sigma$ can not be a cylinder. This case, we have that $M\not\in\tilde{\mathcal{C}}_3$.
\end{proof}

\begin{remark}
The incompressibility condition of at least one torus of $\partial M$ in the proposition above is necessary. Actually, just consider the 3-dimensional manifold $M=\mathbb{S}^1\times \mathbb{D}^2.$ Note that the connected component of $\partial M$ is a compressible torus and $M\in\overline{\mathcal{C}}_3$.
\end{remark}

\begin{Cor} Let $(M,\partial M)$ be a smooth 3-dimensional manifold such that $\partial M$ is the disjoint union of exactly one torus and $k$ spheres, $k\geq 0$. If $M$ has a metric with non-negative scalar curvature and mean convex boundary then

$$M=N\#(\mathbb{S}^1\times \mathbb{D}^2)\#^k \mathbb{B}^3,$$
\noindent where $N$ is a closed 3-dimensional manifold.
\end{Cor}
\begin{proof}
The prime factorization of $M$ is

$$M=N_1\#\cdots\#N_s\# N'\#^k \mathbb{B}^3,$$
\noindent where $N_1,\cdots, N_s$ are closed and prime 3-dimensional manifolds and $N'$ is a prime 3-dimensional manifold  such that $\partial N'$ is a torus. If $M$ has a metric with non-negative scalar curvature and mean convex boundary, it follows from theorem \ref{pp11} that $\partial N'$ is a compressible torus in $N'$. Since $N'$ is prime, we have that $N'$ is irreducible. Consequently, $N'=\mathbb{S}^1\times \mathbb{D}^2$. Therefore,
$$M=N\#(\mathbb{S}^1\times \mathbb{D}^2)\#^k \mathbb{B}^3,$$
\noindent where $N=N_1\# \cdots\# N_s$.
\end{proof}

\begin{corollary}\label{corr1}
Let $(M_1,\partial M_1), \cdots, (M_k,\partial M_k)$  be 3-dimensional manifolds as in proposition \ref{pp11}, and $N_1,\cdots, N_s$ closed 3-dimensional manifolds. For every integer $l\geq 0$, we have that
\begin{enumerate}
\item[1.] $M_1\# \cdots\# M_k\#^l\mathbb{B}^3\not\in\overline{\mathcal{C}}_3$,
\item[2.] $M_1\# \cdots\# M_k\# N_1\# \cdots \# N_s\#^l\mathbb{B}^3\not\in\overline{\mathcal{C}}_3.$
\end{enumerate}
Moreover, if the number of the incompressible tori in $\partial M_1$ is exactly one then
\begin{enumerate}
\item[3.] $M_1\#^l\mathbb{B}^3\not\in\tilde{\mathcal{C}}_3,$
\item[4.] $M_1\# N_1\# \cdots \# N_s\#^l\mathbb{B}^3\not\in\tilde{\mathcal{C}}_3.$
\end{enumerate}
\end{corollary}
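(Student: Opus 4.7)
The plan is to reduce Corollary~\ref{corr1} to Theorem~\ref{pp11} applied directly to the connected sum, once we verify that its boundary has the required structure (spheres and incompressible tori, with at least one torus, and exactly one torus in cases (3)--(4)).

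Denote the manifold in each case by
\[
M \;:=\; M_1 \# \cdots \# M_k \# N_1 \# \cdots \# N_s \#^l \mathbb{B}^3.
\]
The first observation I would record is that since each connected sum is performed in the interior, each $N_j$ is closed, and each $\mathbb{B}^3$ factor contributes one $2$-sphere boundary component, the boundary decomposes as
\[
\partial M \;=\; \partial M_1 \sqcup \cdots \sqcup \partial M_k \sqcup (\text{$l$ spheres}).
\]
Under the hypotheses, each $\partial M_i$ is a disjoint union of spheres and tori incompressible in $M_i$, with at least one torus. Hence $\partial M$ is a disjoint union of spheres and tori containing at least $k \geq 1$ torus components, and exactly one in cases (3) and (4).

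The key step, and the part I expect to be the main obstacle, is showing that every torus $T \subset \partial M_i$ incompressible in $M_i$ remains incompressible in the larger manifold $M$. For this I would run a standard innermost-disk argument. Suppose $D \subset M$ is a compressing disk for $T$; put $D$ in general position with respect to the collection $\mathcal{S}$ of $2$-spheres along which the connected sums are performed. Since $\partial D \subset T$ is disjoint from $\mathcal{S}$, the intersection $D \cap \mathcal{S}$ is a (possibly empty) disjoint union of simple closed curves in $\mathrm{int}(D)$. If it is non-empty, choose an innermost such circle $c$ bounding a subdisk $D' \subset D$ with $\mathrm{int}(D') \cap \mathcal{S} = \emptyset$. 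The curve $c$ also bounds a disk $E$ in the connecting sphere containing it, so replacing $D'$ by $E$ (pushed slightly off $\mathcal{S}$) produces a compressing disk with the same boundary as $D$ but strictly fewer intersections with $\mathcal{S}$. After finitely many such surgeries the compressing disk is disjoint from $\mathcal{S}$, hence contained in $M_i$ with the removed ball excised; filling that ball back in yields a compressing disk for $T$ in $M_i$, contradicting the hypothesis.

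Once this is in place, the rest is immediate. Since $\partial M$ is a disjoint union of spheres and tori incompressible in $M$ with at least one torus, Theorem~\ref{pp11} gives $M \notin \overline{\mathcal{C}}_3$, settling (1) and (2). In cases (3) and (4), the hypothesis on $\partial M_1$ forces $\partial M$ to contain exactly one torus, so the strengthened conclusion $M \notin \tilde{\mathcal{C}}_3$ of Theorem~\ref{pp11} applies.
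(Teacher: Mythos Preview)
The paper states this corollary without proof, treating it as an immediate consequence of Theorem~\ref{pp11}. Your proposal supplies exactly the natural justification: verify that the connected sum $M$ itself satisfies the hypotheses of Theorem~\ref{pp11}, then apply the theorem directly to $M$.

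Your argument is correct. The boundary decomposition $\partial M = \partial M_1 \sqcup \cdots \sqcup \partial M_k \sqcup (l\text{ spheres})$ is immediate since the connected sums are interior, and the innermost-disk surgery you describe is the standard way to show that an incompressible boundary torus $T \subset \partial M_i$ stays incompressible after connected sum. One small point worth making explicit: after the final surgery the disk lies in the punctured summand $M_i \setminus (\text{open balls})$; since its boundary is unchanged and lies on $T$, and the removed balls are interior, the disk sits in $M_i$ and compresses $T$ there, giving the contradiction. With that, Theorem~\ref{pp11} applies verbatim to $M$, yielding (1)--(2), and its sharpened conclusion for a single torus yields (3)--(4).

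An alternative route, closer in spirit to how the classes $\overline{\mathcal{C}}_3$ and $\tilde{\mathcal{C}}_3$ are defined, would be to take the essential surface $(\Sigma,\partial\Sigma)\subset (M_1,\partial M_1)$ produced by Theorem~\ref{pp11} and argue that the composed map into $M$ still has $f_*$ and $f_*^\partial$ injective. Injectivity on $\pi_1$ follows from Seifert--van Kampen ($\pi_1(M_1)$ injects into the free product), but injectivity on relative $\pi_1$ requires a separate check. Your approach avoids this by reapplying Theorem~\ref{pp11} to $M$ directly, which is cleaner.
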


\begin{example}
Define the 3-dimensional manifolds $M_1=(\mathbb{S}^1\times \mathring{T}^2)\# N$ and $M_2=(\mathbb{S}^1\times \mathring{T}^2)\# (I\times \mathbb{S}^2)$, where $\mathring{T}^2$ is a torus minus an open disk and $N$ is a closed 3-dimensional manifold. It follows from the corollary \ref{corr1} that $M_1, M_2 \not\in\tilde{\mathcal{C}}_3$. This implies that $M_1$ and $M_2$ have no metric with non-negative scalar curvature and mean convex boundary.
\end{example}

\begin{proposition}\label{p32}
Let $(M,\partial M, g)$ be a 3-dimensional Riemannian manifold such that $R_g\geq 0$ and $H_g\geq 0$. Then either $M\in\overline{\mathcal{C}}_3$ or $(M,g)$ is flat with totally geodesic boundary. In particular, if $R_g>0$ and $H_g\geq 0$, then $M\in\overline{\mathcal{C}}_3$.
\end{proposition}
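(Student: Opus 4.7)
The plan is to argue by contradiction on the first alternative. Suppose $M\notin\overline{\mathcal{C}}_3$, so there exists a connected surface $(\Sigma,\partial\Sigma)$ with genus $l$ and $k$ boundary components satisfying $l=0,k\geq 2$ or $l\geq 1, k\geq 1$ (in particular $\Sigma$ is not a disk), together with a continuous map $f:(\Sigma,\partial\Sigma)\to(M,\partial M)$ such that $f_*$ and $f_*^{\partial}$ are both injective. I would like to show that if $(M,g)$ is not flat with totally geodesic boundary, then such a $\Sigma$ cannot exist.

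The first step is to invoke Proposition \ref{prop31}: since $R_g\geq 0$ and $H_g\geq 0$, either $(M,g)$ is Ricci flat with totally geodesic boundary, or $M$ admits some other metric $g'$ with $R_{g'}>0$ and $H_{g'}^{\partial M}\equiv 0$. In the former case, since we are in dimension $3$ the Riemann tensor is algebraically determined by the Ricci tensor, so $g$ is in fact flat with totally geodesic boundary, yielding the conclusion. So I would focus on reaching a contradiction in the second case.

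In that case, I would switch to the metric $g'$ and apply Theorem \ref{t32} to $f:(\Sigma,\partial\Sigma)\to(M,\partial M)$: the injectivity hypotheses on $f_*$ and $f_*^{\partial}$ are purely topological, hence they persist under the metric change. This produces an area-minimizing, free-boundary minimal immersion $F:(\Sigma,\partial\Sigma)\to(M,\partial M)$ in $(M,g')$; in particular $F$ is stable. Now I would invoke Theorem \ref{t31}(1) for the metric $g'$: since $R_{g'}>0$ and $H_{g'}^{\partial M}\equiv 0\geq 0$, every free-boundary stable minimal immersion is a disk. This contradicts the fact that $\Sigma$ is not a disk, completing the proof of the first statement.

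For the second assertion, assume $R_g>0$ and $H_g\geq 0$, and again suppose for contradiction that $M\notin\overline{\mathcal{C}}_3$. I would apply Theorem \ref{t32} directly in $(M,g)$ to obtain a free-boundary stable minimal immersion $F:(\Sigma,\partial\Sigma)\to(M,\partial M)$, and then apply Theorem \ref{t31}(1) to conclude $\Sigma$ must be a disk, a contradiction. The main (and essentially only) conceptual content is the combination of Proposition \ref{prop31}, which converts the borderline case $R_g\geq 0$ into either strict positivity of scalar curvature (after a conformal change that preserves mean-convexity in the form of minimality) or the flat-plus-totally-geodesic rigidity; the rest is a direct application of the Chen--Fraser--Pang existence and rigidity theorems.
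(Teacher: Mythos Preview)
Your proof is correct and follows essentially the same line as the paper's: assume $M\notin\overline{\mathcal{C}}_3$, use Proposition~\ref{prop31} to either get flatness (via Ricci-flat $=$ flat in dimension~3) or a metric with $R>0$ and minimal boundary, and then combine Theorems~\ref{t32} and~\ref{t31}(1) to force the obstructing surface to be a disk, a contradiction. The only organizational difference is that the paper first observes $M\in\tilde{\mathcal{C}}_3$ (an immediate consequence of Theorems~\ref{t31}--\ref{t32} applied with the original metric $g$), so that the surface witnessing $M\notin\overline{\mathcal{C}}_3$ can be taken to be a cylinder; this extra reduction is not actually needed for the argument, and your more direct version is equally valid.
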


\begin{proof}
Note that as $R_g\geq 0$ and $H_g\geq 0$, we have that $M\in\tilde{\mathcal{C}}_3$. Assume that $M\not\in\overline{\mathcal{C}}_3$ and $g$ is not flat or $B_g^{\partial M}\not\equiv 0$. Since $M\in\tilde{\mathcal{C}}_3$, we have that there is a continuous map $f:(C,\partial C)\rightarrow (M,\partial M)$ such that $f_*$ and $f_*^{\partial}$ are injectives, where $C$ is a cylinder. As $g$ is not flat or $B_g^{\partial M}\not\equiv 0$, it follows from the proposition \ref{prop31} there exists a Riemannian metric $h$ on $M$ such that $R_h>0$ and $H_h^{\partial M}\equiv 0$. It follows from the Theorem \ref{t32} that there exists a stable free-boundary minimal immersion $F:(C,\partial C)\rightarrow (M,\partial M)$ with respect to the metric $h$. Hence, from Theorem \ref{t31}, we have a contradiction. This implies that $M\in\overline{\mathcal{C}}_3$ or $(M,g)$ is flat with totally geodesic boundary. It follows from the lemma \ref{lema1} that either $M\in\overline{\mathcal{C}}_3$ or $(M,g)$ is flat with totally geodesic boundary.
\end{proof}

\begin{example} Consider the $3$-dimensional manifold $I\times S$, where $S$ is a closed surface with positive genus. It follows from the proposition \ref{p32} that there is no metric on $I\times S$ with positive scalar curvature and mean convex boundary. In particular, there is no such metric on $I\times T^2$.
\end{example}

\begin{example}
It follows from the corollary \ref{corr1} that the 3-dimensional manifolds bellow are not in the set  $\overline{\mathcal{C}}_3$.

{\bf (1)}  $(I\times T^2)\# (I\times T^2)$

{\bf (2)}  $(\mathbb{S}^1\times \mathring{T}^2)\#(\mathbb{S}^1\times \mathring{T}^2)$

{\bf (3)}  $(I\times T^2)\# (S^1\times \mathring{T}^2)$

{\bf (4)} $(I\times T^2)\# (I\times \mathbb{S}^2)$

{\bf (5)} $(\mathbb{S}^1\times \mathring{T}^2)\#  (I\times \mathbb{S}^2)$

{\bf (6)} $(I\times T^2)\# N$, where $N$ is a closed 3-dimensional manifold.

Therefore, it follows from the proposition \ref{p32} that these manifolds have no metric with positive scalar curvature and mean convex boundary.

\end{example}

\section{$n$-dimensional case, $3\leq n\leq 7$}

In this section, we want to study possible generalisation of some results on the existence of certain metrics on $3$-dimensional manifolds to manifolds with dimension not greater than seven. The following theorem is a very important result from geometric measure theory which plays a fundamental role in our investigations.

\begin{theorem}[See Chapter $8$ in \cite{TGM2} and Theorem $5.4.15$ in \cite{TGM1}]\label{t41}
Let $(M,\partial M,g)$ be a Riemannian $n$-dimensional manifold, $3\leq n\leq 7$. Assume that $\alpha\in H_{n-1}(M,\partial M)$ is a non-trivial class. Then there exists a free-boundary, minimal and stable hypersurface $\Sigma$ properly embedded in $(M,g)$ which represents the class $\alpha$.
\end{theorem}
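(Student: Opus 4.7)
The plan is to realise $\Sigma$ as a mass-minimising integral current in the relative homology class $\alpha$, and then invoke the interior/boundary regularity theory for area-minimising hypersurfaces in low dimensions. More precisely, consider the class
$$\mathcal{Z}_\alpha = \{T \in \mathcal{I}_{n-1}(M) : \mathrm{spt}(\partial T) \subset \partial M \text{ and } [T]=\alpha \text{ in } H_{n-1}(M,\partial M)\},$$
where $\mathcal{I}_{n-1}(M)$ denotes integral $(n-1)$-currents in $M$. This class is non-empty (any smooth oriented representative of $\alpha$ lies in it) and $\mathbf{M}(T) \geq c > 0$ for every $T \in \mathcal{Z}_\alpha$ because $\alpha \neq 0$; in particular the infimum $m = \inf\{\mathbf{M}(T) : T \in \mathcal{Z}_\alpha\}$ is a finite positive number.

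Next, I would extract a minimiser. Choose a minimising sequence $T_k \in \mathcal{Z}_\alpha$ with $\mathbf{M}(T_k) \to m$. The boundaries $\partial T_k$ lie in $\partial M$; by capping off with a fixed integral current in $\partial M$ (using the long exact sequence of the pair to represent $\alpha$ by an absolute cycle in the double of $M$, for instance), one gets a uniform bound on $\mathbf{M}(\partial T_k)$ as well. The Federer--Fleming compactness theorem then produces, after passing to a subsequence, an integral current $T \in \mathcal{I}_{n-1}(M)$ with $T_k \rightharpoonup T$ and $\mathbf{M}(T) \leq \liminf_k \mathbf{M}(T_k) = m$. Since the relative homology class is preserved under weak limits (continuity of the class map on bounded mass sets), $T \in \mathcal{Z}_\alpha$, so $T$ attains the infimum. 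A minimiser in $\mathcal{Z}_\alpha$ is in particular locally mass-minimising among comparison currents with boundary in $\partial M$ and with the same relative class, which is the variational condition we need.

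The final and most delicate step is regularity. In the interior of $M$, the Federer--Almgren--Simons regularity theorem for codimension-one mass-minimising integral currents says that the singular set has Hausdorff dimension at most $n-8$; under the hypothesis $n \leq 7$ this forces the interior of $\mathrm{spt}(T)$ to be a smooth embedded hypersurface. At the boundary, the first-variation identity for deformations that are tangent to $\partial M$ along $\partial M$ shows that $T$ must meet $\partial M$ orthogonally in the distributional sense, and the boundary regularity results for free-boundary area-minimisers (due to Gr\"uter, Gr\"uter--Jost and Hardt--Simon, again valid for $n-1 \leq 6$) upgrade this to a smooth free-boundary hypersurface $\Sigma$ up to $\partial \Sigma \subset \partial M$. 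Stability of $\Sigma$ is automatic from the minimising property: the second variation of area along any admissible variation is non-negative at a minimiser.

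The main obstacle is the boundary regularity, since interior regularity is by now classical. One must check that after pulling $\partial T$ into $\partial M$ the minimiser does not develop non-removable boundary singularities and that the free-boundary condition (orthogonality to $\partial M$) holds; this requires care because we impose no fixed Dirichlet boundary, only the constraint $\mathrm{spt}(\partial T) \subset \partial M$. Once the cited low-dimensional boundary regularity theorems are invoked, the smooth free-boundary hypersurface $\Sigma$ represents $\alpha$, is stable, and is properly embedded in $M$, as required.
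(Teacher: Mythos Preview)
The paper does not supply its own proof of this theorem; it is quoted as a black-box result from geometric measure theory, with references to Chapter~8 of Morgan's book and Theorem~5.4.15 of Federer. Your proposal correctly reconstructs the standard argument behind those citations: minimise mass in the relative integral-current class of $\alpha$, use Federer--Fleming compactness to extract a minimiser, apply the codimension-one interior regularity theory (singular set of dimension $\leq n-8$, hence empty for $n\leq 7$), and then invoke free-boundary regularity (Gr\"uter, Gr\"uter--Jost) to obtain a smooth properly embedded hypersurface meeting $\partial M$ orthogonally; stability is immediate from minimality. This is exactly the route the cited references take, so there is nothing substantive to compare.

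One small caution: your uniform bound on $\mathbf{M}(\partial T_k)$ is not automatic from the setup as written, since you only constrain $\mathrm{spt}(\partial T_k)\subset\partial M$. The cleanest fix is to work from the start with a fixed smooth representative $S_0$ of $\alpha$ and minimise over currents of the form $S_0+\partial Q$ with $Q$ an integral $n$-current supported in $M$; then the boundary is controlled by $\partial S_0$ plus a term in $\partial M$, and compactness goes through. Alternatively one can double $M$ across $\partial M$ and minimise in an absolute class, then restrict. Either device is standard and is implicit in the references the paper cites.
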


For $n\geq 4$, we define inductively the set $\tilde{\mathcal{C}}_n$ as  the set of all smooth $n$-dimensional manifolds  $(M,\partial M)$ such that every non-trivial class $\alpha\in H_{n-1}(M,\partial M)$ can be represented by a hypersurface $(\Sigma,\partial \Sigma)$ such that $\Sigma\in \tilde{\mathcal{C}}_{n-1}$.

\begin{theorem} \label{tt1}
Let $(M,\partial M)$ be a $n$-dimensional manifold such that $3\leq n\leq 7$ and $M\not\in\tilde{\mathcal{C}}_n$. Then there is no metric on $M$ with non-negative scalar curvature and mean convex boundary.
\end{theorem}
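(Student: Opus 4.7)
The plan is to induct on $n$: the base $n=3$ is handled directly by the Chen--Fraser--Pang existence and rigidity theorems, and the inductive step carries out a codimension-one descent in the spirit of Schoen--Yau, using the propositions of Section~2.

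For the base case, assume $M\not\in\tilde{\mathcal{C}}_3$ and $g$ is a metric on $M$ with $R_g\geq 0$ and $H_g\geq 0$. By definition there is a continuous map $f\colon (\Sigma,\partial\Sigma)\to (M,\partial M)$ with $f_*$ and $f_*^{\partial}$ both injective, where $\Sigma$ is a compact orientable surface of genus $l$ with $k$ boundary circles and either $l=0$ and $k\geq 3$ or $l\geq 1$ and $k\geq 1$; in particular $\Sigma$ is neither a disk nor a cylinder. Theorem~\ref{t32} then produces a free-boundary area-minimizing (hence stable) minimal immersion of $\Sigma$ into $(M,g)$. But Theorem~\ref{t31}(2) forces any such stable free-boundary minimal immersion in a manifold with $R_g\geq 0$ and $H_g\geq 0$ to be a disk or a flat cylinder, contradicting the topological type of $\Sigma$.

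For the inductive step, fix $4\leq n\leq 7$ and assume the theorem for dimension $n-1$. Suppose $M\not\in\tilde{\mathcal{C}}_n$ carries a metric $g$ with $R_g\geq 0$ and $H_g\geq 0$. By definition there is a nonzero class $\alpha\in H_{n-1}(M,\partial M)$ no representative of which lies in $\tilde{\mathcal{C}}_{n-1}$. Proposition~\ref{prop31} splits the situation into two sub-cases: either $M$ admits a different metric with $R>0$ and minimal boundary, or $(M,g)$ is already Ricci-flat with totally geodesic boundary. In the first sub-case I would pass to the improved metric and invoke Theorem~\ref{t41} (where the restriction $n\leq 7$ enters, through regularity of stable minimal hypersurfaces) to realize $\alpha$ by a properly embedded free-boundary stable minimal hypersurface $\Sigma$. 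Proposition~\ref{p41} then endows each component of $\Sigma$ with a metric of positive scalar curvature and minimal boundary, so the inductive hypothesis forces every component of $\Sigma$ into $\tilde{\mathcal{C}}_{n-1}$, contradicting the choice of $\alpha$.

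The main obstacle is the Ricci-flat sub-case, where Proposition~\ref{p41} does not apply directly because $R_g\equiv 0$ rather than $R_g>0$. Here Theorem~\ref{t41} still provides a free-boundary stable minimal hypersurface $\Sigma$ representing $\alpha$, and the Gauss equation combined with $\mathrm{Ric}_g\equiv 0$ yields $R_\Sigma=-|B^\Sigma|^2\leq 0$. The stability inequality, using $B^{\partial M}\equiv 0$, reduces to $\int_\Sigma|\nabla\varphi|^2\,dv\geq \int_\Sigma |B^\Sigma|^2\varphi^2\,dv=-\int_\Sigma R_\Sigma\,\varphi^2\,dv$ for every test function $\varphi$, while the free-boundary identity $H^{\partial\Sigma}=H^{\partial M}-B^{\partial M}(N,N)$, as derived in the proof of Proposition~\ref{p41}, gives $H^{\partial\Sigma}\equiv 0$. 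Combining these with $c_{n-1}=(n-3)/(4(n-2))\leq 1$ yields
\[
\int_\Sigma|\nabla\varphi|^2\,dv+c_{n-1}\int_\Sigma R_\Sigma\,\varphi^2\,dv+2c_{n-1}\int_{\partial\Sigma}H^{\partial\Sigma}\varphi^2\,d\sigma \;\geq\; (1-c_{n-1})\int_\Sigma|B^\Sigma|^2\varphi^2\,dv\;\geq\;0
\]
for all $\varphi$, so the first eigenvalue $\lambda_1$ of the conformal Laplacian on $\Sigma$ with the Robin-type boundary operator of Remark~\ref{rr1} is non-negative. If $\lambda_1>0$, Remark~\ref{rr1} delivers a conformal metric on $\Sigma$ with positive scalar curvature and minimal boundary; if $\lambda_1=0$, the associated positive eigenfunction used as a conformal factor (with exponent $4/(n-3)$, well-defined because $n-1\geq 3$) produces a scalar-flat metric on $\Sigma$ with minimal boundary. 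In either event each component of $\Sigma$ admits a metric with $R\geq 0$ and $H\geq 0$, so the inductive hypothesis places $\Sigma$ in $\tilde{\mathcal{C}}_{n-1}$, again contradicting the choice of $\alpha$.
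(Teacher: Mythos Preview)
Your proof is correct and follows the same inductive descent as the paper: base case via Chen--Fraser--Pang, inductive step split by Proposition~\ref{prop31} into the positive-scalar-curvature sub-case (handled via Theorem~\ref{t41} and Proposition~\ref{p41}) and the Ricci-flat sub-case. The only difference is in the Ricci-flat branch: the paper simply tests the stability inequality with $\varphi\equiv 1$ to force $|B^{\Sigma}|^{2}\equiv 0$, so the \emph{induced} metric on $\Sigma$ already has $R^{\Sigma}\equiv 0$ and $H^{\partial\Sigma}\equiv 0$, whereas you pass through the conformal Laplacian eigenvalue $\lambda_{1}(\Sigma)\geq 0$ and a conformal change; your route works but is longer than necessary.
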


\begin{proof}
We note that it follows from a theorem above the it is true for $n=3$. We proof by induction on $n$. Assume the result is valid for $n-1$. Assume there exists a metric $g$ on $M$ such that $R_g\geq 0$ and $H_g^{\partial M}\geq 0$. It follows from proposition \ref{prop31} that two cases can occurs.

{\bf (1)} There exists a metric $h$ on $M$ such that $R_h>0$ and $H_h^{\partial M}\equiv 0$. In this case, Since $M\not\in \tilde{\mathcal{C}}_n$, from the theorem \ref{t41} we have there exists a compact, orientable, free-boundary, minimal and stable hypersurface $\Sigma$ properly embedded in $(M,h)$ such that $\Sigma\not\in \tilde{\mathcal{C}}_{n-1}$. From the proposition \ref{p41} there exists a metric on  $\Sigma$ with positive scalar curvature and minimal boundary. However, this is a contradiction since $\Sigma\not\in\tilde{\mathcal{C}}_{n-1}$ and from the induction hypothesis does not exists such metric.
 
{\bf (2)} Assume $Ric_g\equiv 0$ and $B_g^{\partial M}\equiv 0$. Arguing as in the item  $(1)$, we obtain there exists a compact orientable free-boundary stable minimal hypersurface $\Sigma$ in $(M,g)$ such that $\Sigma\not\in\tilde{\mathcal{C}}_{n-1}$. Since $\Sigma$ is free-boundary in $(M,g)$, we have 
\[
H_g^{\partial \Sigma}=H_g^{\partial M}-B_g^{\partial M}(N,N)\equiv 0,
\]
where $N$ is a unit vector field of $\Sigma$ em $(M,g)$. Also, it follows from the Gauss equation and of the stability of $\Sigma$ that $R_g^{\Sigma}\equiv 0$. However, this is a contradiction, since  $\Sigma\not\in\tilde{\mathcal{C}}_{n-1}$, from the  induction hypothesis, and does not exists a such metric on $\Sigma$ with null scalar curvature and minimal boundary.

\end{proof}

\begin{example}\label{ee1}
Consider the $n$-dimensional manifold $M^n=T^{n-2}\times \Sigma$, where $(\Sigma,\partial \Sigma)$ is a connected surface which is not a disk or a cylinder. We have the following chain of hypersurfaces 
\[
S^1\times \Sigma\subset T^{2}\times \Sigma\subset \cdots \subset T^{n-3}\times \Sigma\subset M.
\]
Since the surface $\Sigma$ is essential in $S^1\times \Sigma$, we obtain that $S^1\times \Sigma\not\in\tilde{\mathcal{C}}_3$.  Hence $M\not\in \tilde{\mathcal{C}}_n$, for every $n\geq 3$. It follows from theorem \ref{tt1} that  there is no metric on $M$ with non-negative scalar curvature and mean convex boundary, if $3\leq n\leq 7$.
\end{example}

For $n\geq 4$, we define inductively $\overline{\mathcal{C}}_n$ as  the set of all smooth $n$-dimensional manifolds $(M,\partial M)$ such that every non-trivial class $\alpha\in H_{n-1}(M,\partial M)$ can be represented by a hypersurface $(\Sigma,\partial \Sigma)$ such that $\Sigma\in \overline{\mathcal{C}}_{n-1}$.


\begin{proposition}\label{p42}
Let $(M,\partial M,g)$ be a Riemannian $n$-dimensional manifold, $3\leq n\leq 7$, such that $R_g\geq 0$ and $H_g\geq 0$. Then $M\in \overline{\mathcal{C}}_n$ or $(M,g)$ is Ricci flat with totally geodesic boundary. In particular, if $R_g>0$ and $H_g\geq 0$, then $M\in \overline{\mathcal{C}}_n$.
\end{proposition}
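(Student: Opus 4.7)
The plan is a straightforward induction on the dimension $n$, mirroring the dimension-three argument in Proposition \ref{p32} but using Theorem \ref{t41} in place of Theorem \ref{t32}, and Proposition \ref{p41} in place of Theorem \ref{t31}. The base case $n=3$ is precisely Proposition \ref{p32}, upon noting that in dimension three Ricci flat coincides with flat, so the dichotomy ``$M\in\overline{\mathcal{C}}_3$ or flat with totally geodesic boundary'' is exactly the statement we want.

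For the inductive step, suppose the proposition holds in dimension $n-1$, and let $(M,\partial M,g)$ be an $n$-dimensional Riemannian manifold with $R_g\geq 0$ and $H_g\geq 0$, where $4\leq n\leq 7$. I would first invoke Proposition \ref{prop31}: either $(M,g)$ is Ricci flat with totally geodesic boundary, in which case the second conclusion holds and we are done; or there exists a metric $h$ on $M$ with $R_h>0$ and $H_h^{\partial M}\equiv 0$. It remains to treat the latter case and show $M\in\overline{\mathcal{C}}_n$.

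Argue by contradiction. If $M\not\in\overline{\mathcal{C}}_n$, then by the inductive definition of $\overline{\mathcal{C}}_n$ there exists a non-trivial class $\alpha\in H_{n-1}(M,\partial M)$ such that no hypersurface representing $\alpha$ belongs to $\overline{\mathcal{C}}_{n-1}$. Apply Theorem \ref{t41} to the metric $h$ to obtain a compact, orientable, free-boundary stable minimal hypersurface $\Sigma$ properly embedded in $(M,h)$ which represents $\alpha$. Since $R_h>0$ and $H_h^{\partial M}\geq 0$, Proposition \ref{p41} furnishes a metric on $\Sigma$ with positive scalar curvature and minimal boundary. Feeding this metric into the inductive hypothesis applied to the $(n-1)$-dimensional manifold $\Sigma$, we face two alternatives: either $\Sigma\in\overline{\mathcal{C}}_{n-1}$, which directly contradicts the choice of $\alpha$, or $\Sigma$ is Ricci flat with totally geodesic boundary, which is incompatible with the strict positivity of the scalar curvature on $\Sigma$. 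Either alternative yields a contradiction, so $M\in\overline{\mathcal{C}}_n$.

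The ``in particular'' clause is then immediate: strict positivity $R_g>0$ rules out the Ricci flat branch of the main dichotomy, so the remaining possibility $M\in\overline{\mathcal{C}}_n$ must hold. I do not foresee a substantive obstacle beyond assembling the preparatory results; the scheme is essentially a dimensional descent on the split ``exists a metric of positive scalar with minimal boundary'' versus ``Ricci flat with totally geodesic boundary.'' The only nontrivial ingredient being taken on faith is the existence of the smooth stable free-boundary minimal representative in Theorem \ref{t41}, and it is precisely the regularity theory underlying this existence that restricts the argument to the range $3\leq n\leq 7$.
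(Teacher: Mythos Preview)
Your proposal is correct and follows essentially the same inductive descent as the paper's proof: base case $n=3$ via Proposition \ref{p32}, then Proposition \ref{prop31} to reduce to a metric $h$ with $R_h>0$ and minimal boundary, Theorem \ref{t41} to produce a stable free-boundary minimal representative $\Sigma\notin\overline{\mathcal{C}}_{n-1}$, and Proposition \ref{p41} combined with the inductive hypothesis to reach a contradiction. Your write-up is in fact slightly more explicit than the paper's in unpacking how the inductive hypothesis rules out both alternatives for $\Sigma$.
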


\begin{proof} 
It follows from the proposition \ref{p32} that the result is valid for $n=3$. Lets do it by induction on $n$. Assume the result is valid for $n-1$. Suppose that $Ric_g\not\equiv 0$ or $B_g\not\equiv 0$ and $M\not\in \overline{\mathcal{C}}_n$. It follows from the proposition \ref{prop31} that there exists a metric $h$ on $M$ such that $R_h>0$ and $H_h\equiv 0$. Since $M\not\in \overline{\mathcal{C}}_n$, from the theorem \ref{t41} we have there exists a free-boundary, minimal and stable hypersurface $\Sigma$ properly embedded in $(M,h)$ such that $\Sigma\not\in \overline{\mathcal{C}}_{n-1}$. From the  induction hypothesis we have that $\Sigma$ does not admit a metric with positive scalar curvature and minimal boundary. This is a contradiction with the proposition \ref{p41}. Therefore,  $M\in \overline{\mathcal{C}}_n$ or $(M,g)$ is Ricci flat with totally geodesic boundary.
\end{proof}

\begin{example} Consider the $n$-dimensional manifold $I\times T^{n-1}$ and the chain of hypersurfaces
\[ 
I\times T^2\subset I\times T^3\subset \cdots \subset I\times T^{n-1}.
\]
Since $I\times T^2\not\in \overline{\mathcal{C}}_3$, we have that $I\times T^{n-1}\not\in \overline{\mathcal{C}}_n$ for all $n\geq 3$. Hence, from the proposition \ref{p42}, there exists no metric on $I\times T^{n-1}$ with positive scalar curvature and mean convex boundary, for $3\leq n\leq 7$. 
\end{example}



\begin{theorem} Let $(M, \partial M)$ be a $(n+2)$-dimensional manifold, , $3\leq n+2 \leq 7$, such that there is a non-zero degree map $F:M\rightarrow \Sigma\times T^n$ such that $F(\partial M)=\partial \Sigma\times T^n$ , where $(\Sigma,\partial \Sigma)$ is a connected surface which is not a disk. Then there exists no metric on $M$ with positive scalar curvature and mean convex boundary. However, if $\Sigma$ is not a disk or a cylinder, then there exists no metric on $M$ with non-negative scalar curvature and mean convex boundary.
\end{theorem}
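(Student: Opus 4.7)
The plan is to combine the inductive descent of Proposition \ref{p42} with the non-zero degree hypothesis, reducing to the $3$-dimensional results of Section 3 at the base of the induction. I would proceed by induction on $n$, with $1 \leq n \leq 5$, and assume for contradiction that $M^{n+2}$ admits a metric $g$ with positive (resp.\ non-negative) scalar curvature and mean convex boundary. By Remark \ref{rr1} and Proposition \ref{prop31}, either (i) $M$ carries a metric $h$ with $R_h > 0$ and $H_h = 0$, or, in the non-negative case only, (ii) $(M, g)$ is Ricci flat with totally geodesic boundary.

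For the descent step, I take the projection $\theta_n: \Sigma \times T^n \to S^1$ onto the last $S^1$ factor. Since $\deg(F) \neq 0$, the Poincar\'e--Lefschetz dual of $(\theta_n \circ F)^*[d\theta]$ is a non-trivial class $\alpha \in H_{n+1}(M, \partial M)$, represented by the regular preimage $(\theta_n \circ F)^{-1}(p)$. By Theorem \ref{t41}, $\alpha$ is realized by a properly embedded, free-boundary, stable minimal hypersurface $M_1 \subset M$. In case (i), Proposition \ref{p41} endows $M_1$ with a metric of positive scalar curvature and minimal boundary; in case (ii), the stability inequality together with $Ric_M \equiv 0$ and $B^{\partial M} \equiv 0$ forces $M_1$ to be totally geodesic, yielding $R_{M_1} \equiv 0$ and $H_{M_1}^{\partial M_1} \equiv 0$ on $M_1$ (so that a further application of Proposition \ref{prop31} on $M_1$ keeps the dichotomy (i) or (ii) alive one dimension down). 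To propagate the degree hypothesis, I observe that $F_*[M_1] = \deg(F) \cdot [\Sigma \times T^{n-1} \times \{p\}]$ in $H_{n+1}(\Sigma \times T^n, \partial \Sigma \times T^n)$, so composing with the projection $\pi: \Sigma \times T^n \to \Sigma \times T^{n-1}$ yields $F_1 := \pi \circ F|_{M_1}: M_1 \to \Sigma \times T^{n-1}$ with $\deg(F_1) = \deg(F) \neq 0$ and $F_1(\partial M_1) \subseteq \partial \Sigma \times T^{n-1}$. Iterating this descent $n - 1$ times produces a $3$-manifold $M_{n-1}$ with a non-zero degree map to $\Sigma \times S^1$ inheriting the relevant curvature structure.

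For the $3$-dimensional base case, Proposition \ref{p32} forces $M_{n-1} \in \overline{\mathcal{C}}_3$ in case (i), or $(M_{n-1}, g)$ flat with totally geodesic boundary in case (ii). To contradict case (i), I take a regular value $p \in S^1$ of the last projection and a component $S_0$ of $F^{-1}(\Sigma \times \{p\}) \subset M_{n-1}$ mapping with non-zero degree to $\Sigma$; combining loop-theorem compressions with the $\pi_1$-injectivity of the inclusion $\iota: \Sigma \hookrightarrow \Sigma \times S^1$, one extracts a $\pi_1$-injective and $\pi_1$-$\partial$-injective map from an essential non-disk surface into $M_{n-1}$, certifying $M_{n-1} \notin \overline{\mathcal{C}}_3$. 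For case (ii), Lemma \ref{lema1} forces $M_{n-1}$ to be covered by $I \times T^2$; but the abelian $\pi_1(I \times T^2) = \mathbb{Z}^2$ must land in an abelian subgroup of $\pi_1(\Sigma) \times \mathbb{Z}$, and since $\pi_1(\Sigma)$ is non-abelian free when $\Sigma$ is not a disk or cylinder, this abelian subgroup lies inside a $2$-dimensional $T^2$-subcomplex of $\Sigma \times S^1$, so the map factors up to homotopy through a $2$-complex and must have zero degree, contradicting $\deg F \neq 0$.

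The main obstacle is the $3$-dimensional base case: the topological extraction of a $\pi_1$-injective, non-disk surface from the preimage $F^{-1}(\Sigma \times \{p\})$ (requiring loop-theorem reductions and control over the topology of the essential component), together with the degree/fundamental-group obstruction ruling out the Ricci flat alternative. The descent step is otherwise a direct adaptation of the argument in Proposition \ref{p42}.
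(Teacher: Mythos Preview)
Your overall strategy is sound and reaches the same conclusion, but it is organized quite differently from the paper's proof. The paper does \emph{not} carry the metric through the descent. Instead it first proves, purely topologically, that $M\notin\overline{\mathcal C}_{n+2}$ (and, when $\Sigma$ is neither a disk nor a cylinder, $M\notin\tilde{\mathcal C}_{n+2}$): it builds a chain $\Sigma_2\subset\cdots\subset\Sigma_{n+1}\subset M$ of regular preimages $F^{-1}(\Sigma\times T^{k-2}\times\{\theta\})$, so that each $F_k=F|_{\Sigma_k}$ has nonzero degree, and then shows that \emph{any} hypersurface homologous to $\Sigma_k$ inherits a nonzero-degree map to $\Sigma\times T^{k-2}$; at the $3$-dimensional step one invokes the normal form from \cite{BM} (every relative $2$-class is represented by a surface whose components are essential, disks, or spheres) together with $\chi(S')\le\chi(\Sigma)\le 0$ to produce an essential non-disk component. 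Having established $M\notin\overline{\mathcal C}_{n+2}$ (resp.\ $\tilde{\mathcal C}_{n+2}$), the paper simply applies Proposition~\ref{p42} (resp.\ Theorem~\ref{tt1}) as black boxes. Your approach instead runs the minimal-hypersurface descent and the degree descent simultaneously, and handles the Ricci-flat alternative at the $3$-dimensional base by an ad hoc $\pi_1$ argument (lifting to the infinite cyclic cover of $\Sigma$). Both routes work; the paper's is more modular since it reuses Proposition~\ref{p42} and Theorem~\ref{tt1} verbatim, while yours avoids introducing the classes $\overline{\mathcal C}_k$, $\tilde{\mathcal C}_k$ at the intermediate dimensions but must redo their content inside the induction.

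Two points in your write-up need tightening. First, the assertion $F_*[M_1]=\deg(F)\cdot[\Sigma\times T^{n-1}\times\{p\}]$ is not literally correct, since $H_{n+1}(\Sigma\times T^n,\partial\Sigma\times T^n)$ has other K\"unneth summands; what you actually need (and what holds) is the weaker statement $(\pi\circ F)_*[M_1]=\deg(F)\cdot[\Sigma\times T^{n-1}]$, which follows because $[M_1]$ equals the class of the regular preimage. Second, and more seriously, your $3$-dimensional step in case~(i) is incomplete: ``loop-theorem compressions'' make the surface incompressible but not $\partial$-incompressible, and when $\Sigma$ is a cylinder the resulting component could be a $\partial$-compressible annulus, which is \emph{not} essential and does not witness $M_{n-1}\notin\overline{\mathcal C}_3$. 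The paper closes this gap by invoking \cite{BM} to pass directly to a representative whose components are essential, disks, or spheres; you would need to add $\partial$-compressions (which preserve the relative homology class and hence the degree) to reach the same normal form.
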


\begin{proof} Firstly, let we construct a chain of properly embedded hypersurfaces

$$(\Sigma_2,\partial \Sigma_2)\subset (\Sigma_3,\partial \Sigma_3)\subset \cdots (\Sigma_{n+1},\partial \Sigma_{n+1})\subset (M,\partial M)$$ \noindent
such that, for every $k=2,\cdots, n+1$, we have
\begin{enumerate}
\item $0\not=[\Sigma_k]\in H_k(\Sigma_{k+1},\partial \Sigma_{k+1});$
\item The map $F_k:=\left.F\right|_{\Sigma_k}:\Sigma_k\rightarrow \Sigma\times T^{k-2}$ has non-zero degree and $F_k(\partial\Sigma_k)=\partial \Sigma\times T^{k-2}$;
\end{enumerate}

We are going to construct the desired chain with an induction argument. Let us denote $(\Sigma_{n+2},F_{n+2})=(M,F)$. Without loss of generality,we assume that $F$ is a smooth function.  Now we state how to obtain $(\Sigma_k,F_k)$ from $(\Sigma_{k+1},F_{k+1})$, for $k=n+1,n,\cdots,2$, where $\Sigma_{k+1}$ is a $(k+1)$-dimensional manifold with non-empty boundary and $F_{k+1}:\Sigma_{k+1}\rightarrow \Sigma \times T^{k-1}$ is a non-zero degree map such that $F_{k+1}(\partial\Sigma_{k+1})=\partial \Sigma\times T^{k-1}$ . For this, consider the projection $p_{k-1}:\Sigma\times T^{k-1}\rightarrow S^1$ given by $p_{k-1}(x,(t_1,\cdots,t_{k-1}))=t_{k-1}$, for every $x\in \Sigma$ and $(t_1,\cdots,t_{k-1})\in T^{k-1}=\mathbb{S}^1\times\cdots \times \mathbb{S}^1$. Define $f_{k+1}=p_{k-1}\circ F_{k+1}.$ It follows from the Sard's Theorem that there is $\theta_{k-1}\in S^1$ which is a regular value of $f_{k+1}$ and $\partial f_{k+1}$. Define
$$\Sigma_{k}:=f_{k+1}^{-1}(\theta)=F_{k+1}^{-1}(\Sigma\times T^{k-2}\times\{\theta_{k-1}\}).$$

We have that $\Sigma_{k}$ is a hypersurface of $\Sigma_{k+1}$ such that 

$$\partial \Sigma_{k}=\Sigma_{k}\cap \partial \Sigma_{k+1}=F_{k+1}^{-1}(\partial\Sigma\times T^{k-2}\times\{\theta_{k-1}\})\cap \partial \Sigma_{k+1}\not=\emptyset.$$

Note that $\Sigma_{k}$ represent a non-trivial class in $H_{k}(\Sigma_{k+1},\partial \Sigma_{k+1})$ and $F_{k}:=\left.F_{k+1}\right|_{\Sigma_{k}}:\Sigma_k\rightarrow \Sigma \times T^{k-2}$ is a non-zero degree map with $F_k(\partial\Sigma_k)=\partial \Sigma\times T^{k-2}$ .

\begin{claim}\label{AF5} $\Sigma_3\not\in \overline{\mathcal{C}_3}$.
\end{claim}
In fact, since $0\not=\alpha=[\Sigma_2]\in H_2(\Sigma_{3},\partial \Sigma_{3})$, we have that there is a properly embedded surface $S\subset \Sigma_3$ which represents the homology class $\alpha$ such that its connected components are either essential surfaces, disks, or spheres (See \cite{BM}). Since $0\not=[S]=[\Sigma_2]\in H_2(\Sigma_{3},\partial \Sigma_{3})$ we have that $F_3(S)=\Sigma$ and the map $\left.F_3\right|_{S}:S\rightarrow\Sigma$ has non-zero degree, because

$$deg (\left.F_3\right|_{S})= deg (\left.F_3\right|_{\Sigma_2})= deg (F_2)\not = 0.$$

It follows that there is a connected component $(S',\partial S')$ of $S$ such that $\left.F_3\right|_{S'}: S'\rightarrow \Sigma$ has non-zero degree. This implies that $\chi(S')\leq \chi(\Sigma).$ Since $\Sigma$ is not a disk, we have that $\chi(S')\leq 0$. This implies that $S'$ is not a disk or a sphere. It follows that $S'$ is an essential surface in $\Sigma_3$ which is not a disk. Hence, $\Sigma_3\not\in \overline{\mathcal{C}_3}$.

\begin{claim}\label{AF6} $M\not\in \overline{\mathcal{C}}_{n+2}$.
\end{claim}
Consider a hypersurface $(S_3,\partial S_3)\subset (\Sigma_4,\partial\Sigma_4)$ such that $[S_3]=[\Sigma_3]\in H_3(\Sigma_{4},\partial \Sigma_{4})$. Note that $F_4(S_3)=\Sigma\times S^1$, $F_4(\partial S_3)=\partial\Sigma\times S^1$ and

$$deg (\left.F_4\right|_{S_3})=deg (\left.F_4\right|_{\Sigma_3})=deg (F_3)\not=0.$$

As in the proof of the claim \ref{AF5}, we can conclude that $S_3\not\in \overline{\mathcal{C}_3}$. This implies that $\Sigma_4\not\in \overline{\mathcal{C}_4}$. Inductively, we can show that $\Sigma_{n+1}\not\in \overline{\mathcal{C}}_{n+1}$. Hence, $M\not\in \overline{\mathcal{C}}_{n+2}$. 

\

It follows from proposition \ref{p41} that  there exists no metric on $M$ with positive scalar curvature and mean convex boundary. However, note that if $\Sigma$ is not a disk or a cylinder, we can replace $\overline{\mathcal{C}}$ by $\tilde{\mathcal{C}}$ in the claims \ref{AF5} and \ref{AF6} and conclude that $M\not\in \tilde{\mathcal{C}}_{n+2}$. Consequently, from  theorem \ref{tt1}, we have that  there exists no metric on $M$ with non-negative scalar curvature and mean convex boundary.
\end{proof}

\begin{remark}The following statements are two important consequences of the theorem above:

\begin{enumerate}
\item The manifold $(I\times T^{n-1})\# N$ admits no metric with positive scalar curvature and mean convex boundary.
\item The manifold $(\Sigma \times T^{n-2})\# N$ admits no metric with non-negative scalar curvature and mean convex boundary
\end{enumerate}
\noindent where $N$ is a closed manifold of dimension $3\leq n\leq 7$ and $(\Sigma,\partial \Sigma)$ is a connected surface which is not a disk or a cylinder.
\end{remark}


\begin{thebibliography} {CaGo}

\bibitem{ACLI}
A. Carlotto, C. Li - \emph{Constrained deformations of positive scalar curvature metrics}.  arXiv:1903.11772v2 (2019).

\bibitem{Grom1}
D. Gromoll and W. Meyer - \emph{On complete open manifolds of positive curvature}, Ann. Math., 90 (1969), 75-90.

\bibitem{Grom2}
M. Gromov - \emph{Stable mappings of foliations into manifolds}, Izv. Akad. Nank SSSR Ser. Mat., 33 (1969), 707-734; English transl., Math. USSR-Izv., 3 (1969), 671-694.

\bibitem{GL1}
M. Gromov, H. Lawson - \emph{Positive scalar curvature and the Dirac operator on complete Riemannian
manifolds}, Inst. Hautes E?tudes Sci. Publ. Math. No. 58 (1983), 83-196.

\bibitem{GL2}
M. Gromov, H. Lawson - \emph{Spin and scalar curvature in the presence of a fundamental group. I}, Ann. of
Math. (2) 111 (1980), no. 2, 209-230.

\bibitem{GL3}
M. Gromov, H. Lawson - \emph{The classification of simply connected manifolds of positive scalar curvature},
Ann. of Math. (2) 111 (1980), no. 3, 423-434.

\bibitem{Fraser} J. Chen, A. Fraser and C. Pang - \emph{ Minimal immersions of compact bordered Riemann surfaces with free boundary}.

\bibitem{Escobar} Escobar J. F. - \emph{The Yamabe problem on manifolds with boundary}, Jour. Diff. Geom. {\bf 35} (1992), 21-84.

\bibitem{TGM1} Federer H. - {\it Geometric measure theory}, Die Grundlehren der mathematischen Wissenschaften, Band 153, Springer-Verlag New York Inc., New York, 1969.

\bibitem{Fraser1} Fraser A. and Li M. - \emph{Compactness of the space of embedded minimal surfaces with free boundary in three-manifolds with nonnegative Ricci curvature and convex boundary}.

\bibitem{Hacther} Hatcher A. - \emph{Notes on basic 3-manifold topology}.

\bibitem{Hempel} Hempel J. - \emph{3-manifolds}, Annals of Math. studies {\bf 86}, Princeton Univ. Press 1976.

\bibitem{BM} Martelli B. - {\it An Introduction to Geometric Topology}, arXiv:1610.02592 [math.GT], 2016.

\bibitem {MSY} Meeks W., Simon L. and Yau S. T. - \emph{Embedded minimal surfaces, exotic spheres, and manifolds with positive Ricci curvature}, Ann. of Math. (2) {\bf 116} (1982), no. {\bf 3}, 621-659.

\bibitem{TGM2} Morgan F. - {\it Geometric measure theory}. A biginner's guide. Fourth edition. Elsevier/ Academic Press, Amsterdam,2009. viii+249 pp.

\bibitem{SCHYau2}
R. Schoen, S.-T. Yau - \emph{Existence of incompressible minimal surfaces and the topology of three-dimensional
manifolds with nonnegative scalar curvature}, Ann. of Math. (2) 110 (1979), no. 1, 127-142.

\bibitem{SCHYau} 
Schoen R. and Yau S. T. -  \emph{On the structure of manifolds with positive scalar curvature},  Manuscripta Mathematica, 28 (1-3), 159-183,1979.







\end{thebibliography}
\end{document}